\newtheorem{theorem}{Theorem} \newtheorem{lemma}{Lemma}
 \newtheorem{remark}{Remark}\newtheorem{definition}{Definition}
 \newcommand{\La}{\Lambda}
\newcommand{\R}{{\mathbb R}}  \newcommand{\Z}{{\mathbb Z}} \newcommand{\N}{{\mathbb N}}
\newcommand{\T}{{\mathbb T}}
\begin{document}

\title{Discrete Uniqueness Sets for  Functions with  Spectral Gaps}

\author{Alexander Olevskii  and  Alexander Ulanovskii}

\date{} \maketitle

\noindent A.O.: School of Mathematics, Tel Aviv University, Ramat Aviv,  69978 Israel\\ E-mail:
 olevskii@post.tau.ac.il

\medskip

\noindent A.U.: Stavanger University,  4036 Stavanger, Norway\\ E-mail: Alexander.Ulanovskii@uis.no

\begin{abstract}
  It is well-known that entire functions whose spectrum belongs to a  fixed bounded set $S$,  admit real uniformly discrete uniqueness sets $\La$. We show that the same is true for much wider spaces of continuous functions. In particular,   Sobolev spaces have this property whenever $S$ is a set of infinite measure having "periodic gaps".
   The periodicity condition is crucial. For  sets $S$ with randomly distributed gaps, we show that the uniformly discrete sets $\La$  satisfy a strong non-uniqueness property: Every discrete function $c(\lambda)\in l^2(\La)$ can be interpolated by an analytic $L^2$-function with spectrum in $S$.\end{abstract}
 \begin{large}

\section{Introduction}


\noindent{\bf  Paley-Wiener Space}.
We will use the standard form of the
 Fourier transform:
$$
F(t)=\hat f(t):=\int_\R e^{-2\pi i tx}f(x)\,dx.
$$

    Given a measurable set $S$, the Paley-Wiener space $PW_S$ consists of the inverse Fourier transforms of all square-integrable functions  $F$ which vanish a.e. outside $S$. The set $S$ is called the spectrum of the  space $PW_S$. Clearly, if the measure of $S$ is finite, then $F\in L^1(\R)$, and so every function $f \in PW_S$ is continuous. If $S$ is bounded, then every  $f \in PW_S$ is an entire function of exponential type.

\medskip
\noindent{\bf Uniformly discrete sets}. A set $\La\subset\R$ is called {\it uniformly discrete} (u.d.) if $\delta(\La)>0$, where
\begin{equation}\label{sep}\delta(\La):=\inf_{\lambda,\lambda'\in\La,\lambda\ne\lambda'} |\lambda-\lambda'|\end{equation}
is the infimal distance between different elements of $\La$.

 A u.d. set $\La$ is said to have the {\it uniform density} $D(\La)$, if $\La$ is regularly distributed in the following sense:$$\mbox{Card}\left(
\La\cap(x,x+r)\right)=rD(\La)+o(r) \mbox{ uniformly on } x \mbox{ as } r\to \infty.$$

\medskip
\noindent{\bf Uniqueness Problem}.
Let $F$ be a space of continuous functions on the real line $\R$. A set $\La\subset\R$ is called a {\it uniqueness set}  for $F$ if
$$f \in F, f|_\La =0 \Rightarrow f =0.$$
Otherwise, $\La$ is called a {\it non-uniqueness set}.

\medskip\noindent{\bf Problem}. {\it Which  spaces of continuous functions on $\R$  admit u.d. uniqueness sets?}

\medskip

We will consider this problem for spaces of function whose spectrum belongs to a fixed set $S$.
It is natural to  distinguish between the following cases: $S$  is a bounded set, an unbounded set of finite measure, and a set of infinite measure.

In the present paper we focus on spaces of continuous functions whose spectrum lies in a set  $S$ of  infinite measure. In Sec. 3--5 we establish that wide spaces of such functions admit u.d. uniqueness sets, provided  $S$ has periodic gaps.
The periodicity condition  is important. In particular, in Sec. 6, for sets $S$ with randomly distributed gaps we show that every u.d. set $\La$ satisfies some strong non-uniqueness property.

We start with a short survey of known results on the first two cases. A detailed discussion of these and related results can be found in \cite{ou1}. For simplicity of presentation, we focus on the one-dimensional case.

\section{Spectra of Finite Measure}

\noindent{\bf Bounded Spectra}.
The classical  case is when $S = [a,b]$  an interval. Then the elements of $PW_S$ are entire functions of exponential type. The distribution of zeros of such functions is very well studied, see
\cite{levin}. In particular, if the density $D(\La)$ exists, then the condition $D(\La)\geq |S|$ is necessary while the condition
$D(\La) >|S|$
is sufficient for $\La$ to be a uniqueness set for $PW_S$, where $|S|=b-a$ denotes the measure of $S$. This can be shown by standard complex variable techniques. A classical result of Beurling and Malliavin \cite{BM} states that the same is true for irregular sets $\La$, provided the uniform density is replaced with a certain exterior one (the Beurling--Malliavin density).

In the case of disconnected spectra $S$, the uniqueness property of u.d. sets cannot be expressed in terms of their density: Some "dense" (relatively to the measure of $S$) u.d. sets $\La$ may be  non-uniqueness sets for $PW_S$.  For example, one can easily check that $\La=\Z$ is a non-uniqueness set
for $PW_S$, where $S = [0,\epsilon]\cup[1,1+\epsilon],0< \epsilon<1$.

On the other hand,  some "sparse" u.d. sets $\La$ may be uniqueness sets for $PW_S$ with a "large" spectrum $S$. This phenomenon was  discovered by Landau \cite{L}, who proved that certain perturbations of $\Z$ produce uniqueness sets for $PW_S$ whenever $S$ is a finite union of intervals $[k+a,k+1-a],$ where $0<a<1/2$ is any fixed number. The uniqueness sets $\La$  constructed by Landau have a complicated structure.

 A more general result is proved in   \cite{ou2}:

 \begin{theorem} The set $$
 \La:=\{n+2^{-|n|}, n\in\Z\}
 $$ is a uniqueness set for $PW_S$, for every bounded set $S$ satisfying $|S|<1.$
 \end{theorem}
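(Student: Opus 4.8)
\emph{Plan of proof.} Writing $F=\hat f$, we must show: if $F\in L^2(\R)$ with $\mbox{supp}\,F\subseteq S$, $|S|<1$, $S\subseteq[-A,A]$, and
$$f(\lambda_n)=\int_S F(t)e^{2\pi i\lambda_n t}\,dt=0\quad\text{for every }n\in\Z,\qquad \lambda_n=n+2^{-|n|},$$
then $F=0$ a.e. Two consequences of $S$ being \emph{bounded} will be used throughout: $F\in L^1$ with all moments $\int|t|^{j}|F(t)|\,dt\le A^{j}\|F\|_{1}$ finite, and for each fixed $t$ the sequence $\big(F(t+k)\big)_{k\in\Z}$ has only finitely many nonzero terms. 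The idea is to expand the interpolation conditions in powers of the super-geometrically small shifts $2^{-|n|}$ and read off analyticity of suitable periodizations of $F$. For $j\ge0$ set $\phi_j(t):=\sum_{k\in\Z}F(t+k)(t+k)^{j}$, a $1$-periodic $L^1$ function with $\|\phi_j\|_{L^1[0,1]}\le A^{j}\|F\|_1$. Since $e^{2\pi i\lambda_n t}=e^{2\pi int}e^{2\pi i2^{-|n|}t}$ and the Taylor series of $e^{2\pi i2^{-|n|}t}$ converges uniformly on $[-A,A]$, periodizing inside the integral for $f(\lambda_n)$ gives, for every $n$,
$$\sum_{j\ge0}\frac{(2\pi i\,2^{-|n|})^{j}}{j!}\,\widehat{\phi_j}(-n)=0,$$
where $\widehat{g}(m):=\int_0^1 g(t)e^{-2\pi imt}\,dt$; the interchange of sum and integral is legitimate because $\sum_j\frac{(2\pi\,2^{-|n|})^{j}}{j!}\|\phi_j\|_1<\infty$.

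I would then prove by induction on $j$ that $\phi_j\equiv0$. Granting $\phi_0=\dots=\phi_{j-1}\equiv0$, the displayed identity becomes $\sum_{i\ge j}\frac{(2\pi i\,2^{-|n|})^{i}}{i!}\widehat{\phi_i}(-n)=0$; isolating the term $i=j$ and bounding the tail via $2^{-|n|}\le1$ and $|\widehat{\phi_i}(-n)|\le\|\phi_i\|_1\le A^{i}\|F\|_1$ yields $|\widehat{\phi_j}(-n)|\le C_j\,2^{-|n|}$ for all $n\in\Z$, with $C_j$ depending only on $j,A,\|F\|_1$. Hence the Fourier series of $\phi_j$ converges uniformly on the strip $\{\,|\mbox{Im}\,t|<(\log2)/2\pi\,\}$, so $\phi_j$ extends to a $1$-periodic function analytic there. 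On the other hand $\phi_j$ vanishes a.e. on
$$E:=\{\,t\in[0,1):\ t+k\notin S\ \text{for all }k\in\Z\,\},$$
and since the disjoint pieces $S\cap[k,k+1)$ project into $[0,1)$ we get $\big|[0,1)\setminus E\big|\le|S|<1$, so $|E|>0$. A function analytic on a connected domain that vanishes on a set of positive measure is identically zero; therefore $\phi_j\equiv0$, completing the induction.

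Finally, $\phi_j\equiv0$ for all $j\ge0$ says that for a.e. $t$ the finitely supported sequence $c_k:=F(t+k)$ satisfies $\sum_k c_k x_k^{\,j}=0$ for every $j\ge0$, where the nodes $x_k=t+k$ are pairwise distinct. Non-singularity of Vandermonde matrices forces $c_k=0$ for all $k$, hence $F=0$ a.e. and $f\equiv0$.

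I expect the only delicate points to be bookkeeping: justifying the term-by-term manipulations (handled by the bounded support of $F$) and the step "analytic $+$ zero on a set of positive measure $\Rightarrow$ identically zero" (the identity theorem, applied at a density point of $E$). The conceptual crux — and the only place the precise rate $2^{-|n|}$ is used — is the estimate $|\widehat{\phi_j}(-n)|\le C_j2^{-|n|}$: it is the geometric decay of the perturbation that promotes each periodization $\phi_j$ to an analytic function, after which the measure hypothesis $|S|<1$ does the rest. Any perturbation decaying at least geometrically would work the same way; for $\La=\Z$ the scheme collapses already at $j=0$, consistent with $\Z$ being a non-uniqueness set for suitable disconnected $S$ with $|S|<1$.
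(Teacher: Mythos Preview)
Your argument is correct. The paper itself does not prove this theorem --- it is quoted from \cite{ou2} as background --- so there is no in-paper proof to compare against directly. Your approach (periodize the moments $t^{j}F(t)$ to obtain $1$-periodic functions $\phi_j$, use the identity
\[
f(\lambda_n)=\sum_{j\ge 0}\frac{(2\pi i\,2^{-|n|})^{j}}{j!}\,\widehat{\phi_j}(-n)
\]
to extract inductively the bound $|\widehat{\phi_j}(-n)|\le C_j\,2^{-|n|}$, and then kill each $\phi_j$ by analytic continuation together with the measure hypothesis $|S_1|\le |S|<1$) is essentially the argument of the cited reference, formulated on the spectral side. Equivalently one may work on the spatial side via the Taylor expansion $f(n+2^{-|n|})=\sum_{j\ge 0} f^{(j)}(n)\,2^{-j|n|}/j!$ and Poisson summation, using $\widehat{\phi_j}(-n)=(2\pi i)^{-j}f^{(j)}(n)$. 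The technical points you flag --- the interchange of sum and integral, the a.e.\ identification of $\phi_j$ with its analytic Fourier sum before invoking the identity theorem, and the Vandermonde endgame --- are all routine given that $F$ has bounded support.
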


This theorem remains true for the bounded sets $S$ of arbitrarily large measure satisfying $|S_1|<1$, where we denote by
\begin{equation}\label{pr}
S_a:=(S+a\Z)\cap[0,a]
\end{equation} the "projection" of $S$ onto $[0,a]$.

Moreover, the result holds true also for the unbounded sets of finite measure which have  a "moderate accumulation" at infinity, see \cite{ou2}.

Using re-scaling, one may formulate a corresponding result for any  bounded set $S$.

\medskip\noindent{\bf Unbounded Spectra of Finite Measure}.
It was shown in  \cite{ou} (see also \cite{ou1}, Lec. 10)  that for every (bounded or unbounded) set $S$ in $\R$ of finite measure, the space $PW_S$ possesses a u. d. uniqueness set:

\begin{theorem} For every set $S$ of finite measure, there is  a u.d. set $\La$ satisfying  $D(\La)=|S|$, which is a uniqueness set for $PW_S$.\end{theorem}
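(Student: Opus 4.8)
The natural first step is to pass to the dual, Fourier-side formulation. Put $e_\lambda(t):=e^{-2\pi i\lambda t}$; for $f\in PW_S$ with $\hat f=F\in L^2(S)$ one has $f(\lambda)=\int_S F(t)e^{2\pi i\lambda t}\,dt=\langle F,e_\lambda\rangle_{L^2(S)}$, so a nonzero $f$ vanishing on $\La$ is the same as a nonzero $F\in L^2(S)$ orthogonal to every $e_\lambda$, $\lambda\in\La$. Hence $\La$ is a uniqueness set for $PW_S$ precisely when the exponential system $E(\La):=\{e_\lambda|_S:\lambda\in\La\}$ is complete in $L^2(S)$, and the task becomes: construct a u.d. $\La$ with $D(\La)=|S|$ for which $E(\La)$ is complete in $L^2(S)$. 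Two elementary facts, valid because $|S|<\infty$, will be used repeatedly: (i) the linear span of $\{e_\lambda|_S:\lambda\ge 0\}$ is dense in $L^2(S)$ — if $F\in L^2(S)$ were orthogonal to it, then $F\mathbf 1_S$, viewed in $L^2(\R)$, would have Fourier transform supported in a half-line, hence would lie in a Hardy space of a half-plane, while vanishing on $\R\setminus S$, a set of positive measure, which forces $F\equiv 0$; and (ii) $\lambda\mapsto e_\lambda|_S$ is norm-continuous from $\R$ into $L^2(S)$. From (i), the closed span of $\{e_\lambda:\lambda\in[0,L]\}$ increases to all of $L^2(S)$ as $L\to\infty$, and, since $e_{a+\mu}=e_ae_\mu$ with $e_a$ unimodular, the same is true of $\{e_\lambda:\lambda\in[a,a+L]\}$ for each fixed $a$.

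I would then build $\La$ by exhaustion against a fixed countable dense sequence $g_1,g_2,\dots$ in $L^2(S)$. Fix a separation constant $\delta_0<1/|S|$. On the negative half-line, and on the positive half-line outside the ``work intervals'' described next, place a $\delta_0$-separated skeleton of asymptotic density exactly $|S|$ (a perturbation of $\frac{1}{|S|}\Z$ suffices). Far out in the positive direction reserve a very sparse sequence of long disjoint work intervals $I_m=[a_m,a_m+L_m]$, with $L_m\to\infty$ but $a_m$ growing so fast that $\sum_{a_m\le r}\big(L_m+\#(\La\cap I_m)\big)=o(r)$ as $r\to\infty$. At stage $m$ one places inside $I_m$ a finite, $\delta_0$-separated set of frequencies whose exponentials approximate $g_m$ within $1/m$; since $\{g_m\}$ is dense, $E(\La)$ is then complete, so $\La$ is a uniqueness set for $PW_S$. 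By construction $\La$ is $\delta_0$-separated, and its counting function coincides with that of the skeleton up to an $o(r)$ error, whence $D(\La)=|S|$; this value is optimal, since completeness of $E(\La)$ already forces $D(\La)\ge|S|$ by Landau's density theorem.

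The step that carries the real weight — suppressed above — is to realize, inside the prescribed long interval $I_m$, an approximation of $g_m$ by finitely many exponentials whose frequencies are $\delta_0$-separated. Completeness of $\{e_\lambda:\lambda\in[a_m,a_m+L_m]\}$ by itself yields approximating frequencies with no control on their spacing, and — as the example $\La=\Z$, $S=[0,\epsilon]\cup[1,1+\epsilon]$ from the introduction shows — for disconnected spectra one cannot simply substitute the nodes of a fixed lattice. What is needed is a spacing-controlled approximation adapted to $S$, and here the finiteness of $|S|$ is exactly the point: for large $a$ the folded spectrum $(S+a\Z)\cap[0,a]$ has measure close to $|S|$, so over a sufficiently long block a suitably perturbed $\delta_0$-separated arithmetic progression of step slightly below $1/|S|$ can reproduce, to arbitrary accuracy, the finite-dimensional portion of $L^2(S)$ relevant to $g_m$; letting $m$ vary, these finite-dimensional gains exhaust $L^2(S)$. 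Turning this into a rigorous estimate — balancing the separation constraint, the required accuracy, the length $L_m$, and the $o(r)$ budget for the total count — is, I expect, the technical core of the argument, and also the reason the auxiliary intervals must be taken both long and very sparsely spaced.
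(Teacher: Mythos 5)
Your reduction to completeness of the exponential system $E(\La)$ in $L^2(S)$ and the half-line completeness fact (i) are correct, but the proposal has a genuine gap at exactly the point you yourself flag as ``the technical core'': the existence, inside a prescribed far-away interval $I_m$, of a \emph{finite, $\delta_0$-separated} set of frequencies whose exponentials approximate a given $g_m\in L^2(S)$. This is not a quantitative detail to be balanced against the $o(r)$ budget; it is essentially the theorem itself. Completeness of the continuum $\{e_\lambda:\lambda\in[a_m,a_m+L_m]\}$ produces approximating frequencies with no spacing control, and no perturbation argument upgrades clustered frequencies to $\delta_0$-separated ones while preserving the approximation (divided-difference tricks destroy the separation). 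The heuristic you offer --- the folded spectrum $(S+a\Z)\cap[0,a]$ has measure close to $|S|$, so a ``suitably perturbed'' progression of step just below $1/|S|$ does the job --- is precisely the kind of claim the example $\La=\Z$, $S=[0,\epsilon]\cup[1,1+\epsilon]$ warns about, and making it rigorous is the actual content of the cited works. Note that the present paper does not prove Theorem 2 at all: it quotes it from \cite{ou} and \cite{ou1} (Lecture 10), and the analogous results it does prove (Theorems 3 and 4) use a genuinely different mechanism --- periodization of the Fourier transform together with Lemma 1's decomposition of $\Z$ into sets $Z_j$ whose exponential systems are complete in $L^2(S_a)$, plus a dense sequence of translates $\alpha_j$ --- not block-by-block approximation of a dense sequence in $L^2(S)$.

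Two further defects are real even granting the core step. The density bookkeeping does not yield the \emph{uniform} density $D(\La)=|S|$: the definition requires $\mbox{Card}(\La\cap(x,x+r))=r|S|+o(r)$ uniformly in $x$, and a window lying inside a long work interval $I_m$ (with $L_m\to\infty$) sees only your uncontrolled finite patch, so uniformity fails unless the patch itself has density $|S|$ throughout $I_m$, a constraint your construction never imposes; relatedly, the length $L_m$ needed to approximate $g_m$ from frequencies in $[a_m,a_m+L_m]$ depends on $a_m$ (it equals the distance from $e_{-a_m}g_m$ to the span over $[0,L_m]$, and the modulates $e_{-a}g_m$ are not precompact), so $L_m$ must be chosen after $a_m$, while your sparsity condition implicitly needs $L_m=o(a_m)$ --- a uniformity in $a$ you never establish. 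Finally, the closing claim that completeness already forces $D(\La)\geq|S|$ ``by Landau's density theorem'' is false: Landau's necessary density conditions concern stable sampling and interpolation, not mere uniqueness, and Landau's construction \cite{L} (as well as Theorem 1 of this paper) gives uniqueness sets of density $1$ for disconnected spectra of measure much larger than $1$; the optimality of $D(\La)=|S|$ asserted after Theorem 2 refers only to the case when $S$ is an interval.
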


By the discussion above, the density condition $D(\La)=|S|$ is optimal, since one cannot get a smaller  density when $S$ is an interval.

\section{Sobolev Spaces with Periodic  Spectral Gaps}

\subsection{Periodic Spectral Gaps}

 We say that $S$ has    periodic  "strong" gaps if there exists $a>0$ such that \begin{equation}\label{2}
|\overline{S_a}|<a,
\end{equation}
 where $\overline{S_a}$ denotes the closure of $S_a$, and the set $S_a$ is defined in (\ref{pr}).
Condition (\ref{2}) means that there is a non-empty interval $I\subset[0,a]$ such that $S\cap (I+a\Z)=\emptyset.$

We say that $S$ has  periodic "weak"  gaps if
 \begin{equation}\label{3}
|S_a|<a.
\end{equation}Condition (\ref{3}) means that there is a set of positive  measure  $Q\subset[0,a]$ such that
$S\cap (Q+a\Z)=\emptyset.$

Observe that {\it every set $S$ of finite measure has  periodic weak gaps}, since we have $|S_a|<a$, for every $a>|S|.$

\subsection{Uniqueness Sets for Sobolev Spaces}

Given any u.d. set $\La$, it is obvious that there is a non-trivial smooth function $f$ which vanishes on $\La$. However, this is no longer so if the spectrum of $f$  has  weak periodic gaps.
We will state the result for Sobolev spaces.

For every number $\alpha>1/2$, we denote by $W^{(\alpha)}$ the Sobolev space of  functions  $f$ such that the Fourier transform $F=\hat f$  satisfies
\begin{equation}\label{4}
\|F\|_\alpha^2:=\int_\R (1+|t|^{2\alpha})|F(t)|^2\,dt<\infty.
\end{equation}
It is clear that the functions $F$ satisfying (\ref{4}) belong to $L^1(\R)$, and so $W^{(\alpha)}$ consists of continuous functions.
 We denote by $W^{(\alpha)}_S$ the subspace of $W^{(\alpha)}$ of functions $f$ with spectrum in $S$, i.e.  $F=0$  a.e. outside $S$.

\begin{theorem}\label{t1}
Suppose a set $S$ satisfies $|S_a|<a$, for some $a>0$. Then there is a u.d. set $\La$ of density $D(\La)=a$, which is a uniqueness set for  $W^{(\alpha)}_{S}, \alpha>1/2$.
\end{theorem}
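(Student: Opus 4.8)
The plan is to normalize the problem, construct $\La$ as a rapidly shrinking perturbation of a lattice, and then convert the hypothesis $f|_\La=0$ into an analyticity statement on the circle. After replacing $S$ by $S/a$ and $\La$ by $a\La$ we may assume $a=1$, so that $|S_1|<1$; fix, as in the discussion following (\ref{3}), a set $Q\subset[0,1]$ with $|Q|>0$ and $S\cap(Q+\Z)=\emptyset$. Equivalently, for every $F$ supported in $S$ and every $k\in\Z$ the slice $F(\cdot+k)|_{[0,1]}$ is supported in $[0,1]\setminus Q$. For $\La$ I would take a perturbation of $\Z$ of the form $\La=\{n+\epsilon_n:n\in\Z\}$ with $\epsilon_n\to0$ exponentially fast — for instance $\epsilon_n=2^{-|n|}$, which by Theorem 1 is already the correct choice in the bounded case; such a set is automatically u.d. and has $D(\La)=1$.

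Now suppose $f\in W^{(\alpha)}_S$ and $f|_\La=0$, and write $F=\hat f$. The first ingredient is quantitative smoothness: since $\alpha>1/2$, Cauchy--Schwarz applied to $f(x+h)-f(x)=\int F(t)(e^{2\pi i th}-1)e^{2\pi i tx}\,dt$ gives a modulus of continuity $|f(x+h)-f(x)|\le C|h|^{\beta}\|F\|_\alpha$ with $\beta=\min(\alpha-\tfrac12,1)>0$, whence, using $f(\lambda_n)=0$,
$$|f(n)|=|f(n)-f(\lambda_n)|\le C\epsilon_n^{\beta}\|F\|_\alpha,$$
so the samples $f(n)$ decay exponentially in $|n|$. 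The second ingredient is the periodization $\Phi(t):=\sum_k F(t+k)$: the weight $1+|t|^{2\alpha}$ with $\alpha>1/2$ makes $\Phi\in L^2[0,1]$ (a one-line Cauchy--Schwarz estimate), its support lies in $[0,1]\setminus Q$, and its Fourier coefficients are exactly the samples $f(n)$.

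The core of the argument is then the following elementary lemma: if $w\in L^2[0,1]$ vanishes on a set of positive measure and its Fourier coefficients decay exponentially, then $w\equiv0$ — indeed $w$ extends to a function holomorphic in an annulus around the unit circle, which cannot vanish on a subarc of positive length unless it is identically zero. Applying this to $\Phi$ (which vanishes on $Q$) gives $\Phi\equiv0$, i.e. $f|_\Z=0$. This in turn puts $f$ in the reduced form $f(x)=\sum_{k\neq 0}(e^{2\pi i kx}-1)V_k(x)$, where $V_k(x):=\int_0^1 F(s+k)e^{2\pi i sx}\,ds$ is entire of exponential type with $\|V_k\|_\infty\le \|F(\cdot+k)\|_{L^2[0,1]}$ and $\sum_k(1+|k|^{2\alpha})\|F(\cdot+k)\|_{L^2[0,1]}^2<\infty$.

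It remains to upgrade $f|_\Z=0$ to $F\equiv0$, and this is where the real work lies. Feeding the exact identities $f(\lambda_n)=\sum_{k\neq0}(e^{2\pi i k\epsilon_n}-1)V_k(\lambda_n)=0$ back in, dividing by $2\pi i\epsilon_n$, and letting $|n|\to\infty$ should, by the same annulus argument, force the next periodization $\sum_k kF(\cdot+k)$ to vanish; iterating would give $\sum_k k^j F(\cdot+k)\equiv0$ for every admissible $j$, and combining this with Theorem 1 applied cell by cell — each $F(\cdot+k)$ has bounded spectrum whose projection onto $[0,1]$ has measure $<1$ — would yield $F(\cdot+k)\equiv0$ for all $k$, hence $F\equiv0$. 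The main obstacle is that when $\alpha$ is only slightly larger than $1/2$ the higher periodizations $\sum_k k^j F(\cdot+k)$ need not even converge, so this moment bootstrap must be replaced by a more robust device: truncate the spectrum to $[-R,R]$, apply Theorem 1 in its unbounded, finite-measure form to $F\mathbf 1_{[-R,R]}$, control the tail via $\int_{|t|>R}|F|\le CR^{1/2-\alpha}\|F\|_\alpha\to0$, and pass to the limit. Making this limiting procedure quantitative enough to conclude for every $\alpha>1/2$ is the step I expect to be delicate.
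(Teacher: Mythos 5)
There is a genuine gap, and it sits exactly where you say you expect delicacy: everything you actually prove gives only $f|_{\Z}=0$, and the passage from there to $F\equiv 0$ is not carried out. Your first half is fine (the H\"older bound from $\alpha>1/2$, the exponential decay of $f(n)$ along the perturbed lattice $\{n+2^{-|n|}\}$, and the annulus argument forcing the periodization $\Phi=\sum_k F(\cdot+k)$ to vanish), but the two devices you sketch for the remaining step both break down. The moment bootstrap fails not only because $\sum_k k^jF(\cdot+k)$ need not converge when $\alpha$ is close to $1/2$, but also because even when it does converge the limit you take is vacuous: for each fixed $k$ one has $V_k(\lambda_n)\to 0$ as $|n|\to\infty$ (these are essentially Fourier coefficients of the slice $F(\cdot+k)|_{[0,1]}$), so letting $|n|\to\infty$ term by term in $\sum_{k\ne 0}\frac{e^{2\pi i k\epsilon_n}-1}{2\pi i\epsilon_n}V_k(\lambda_n)=0$ yields $0=0$; to rerun the annulus argument you would instead need exponential decay of the coefficients $\sum_k kV_k(n)$, which requires a quantitative comparison you have not supplied. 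The truncation route fails for a different reason: replacing $F$ by $F\mathbf 1_{[-R,R]}$ produces a function that only \emph{approximately} vanishes on $\La$, and a uniqueness set gives no stability estimate whatsoever (there is no frame-type inequality at this critical density), so "pass to the limit" has no mechanism behind it. In addition, your plan commits to the specific set $\La=\{n+2^{-|n|}\}$, whose uniqueness property for $W^{(\alpha)}_S$ with $S$ of infinite measure is precisely what is in question; Theorem 1 and its variants only cover bounded $S$ (or finite measure), so they cannot be invoked "cell by cell" without the missing argument.

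The paper avoids all of this by not fixing a perturbed lattice at all. It decomposes $\Z=\bigcup_j Z_j$ (Lemma 1) so that each exponential system $\{e^{-2\pi int},n\in Z_j\}$ is complete in $L^2(S_1)$, takes $\La=\bigcup_j(Z_j+\alpha_j)$ with $\{\alpha_j\}$ dense in $[0,1]$, and then for each $j$ applies your periodization step to the modulated function $e^{2\pi i\alpha_j t}F(t)$: its periodization $H_j$ lies in $L^2(\T)$, is supported in $S_1$, and has Fourier coefficients $f(n+\alpha_j)$ vanishing on $Z_j$, so completeness gives $H_j=0$, i.e.\ $f(\Z+\alpha_j)=0$; density of $\{\alpha_j\}$ and continuity of $f$ finish the proof. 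No exponential decay, no analyticity, and no bootstrap are needed, which is exactly why the argument works for every $\alpha>1/2$ and for unbounded $S$. If you want to salvage your route, the honest open question is whether a single perturbed lattice can be a uniqueness set here; as written, your proof does not establish the theorem.
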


\subsection{Decomposition of $\Z$}

\begin{lemma} Let  $A \subset[0,1], |A| < 1$. There exist pairwise disjoint sets $Z_j \subset\Z, j\in\N$, such that
every exponential system \begin{equation}\label{exps}\{e^{-i 2\pi n t},n\in Z_j\}\end{equation} is complete in $L^2(A)$.
\end{lemma}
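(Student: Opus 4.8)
The plan is to dualize, reduce to a combinatorial splitting of $\Z$, and close with a uniqueness theorem for functions having large spectral gaps.

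\medskip\noindent\emph{Step 1: dualization.} Identify $g\in L^2(A)$ with its extension by zero, so that $g\in L^2(\T)$ with $\mathrm{supp}\,g\subseteq A$ and $\hat g\colon\Z\to\C$. Then $\{e^{-2\pi i nt}:n\in Z\}$ is complete in $L^2(A)$ precisely when there is no nonzero such $g$ whose Fourier coefficients vanish on $Z$, i.e. with $\mathrm{spec}(\hat g)\subseteq\Z\setminus Z$. So it is enough to construct pairwise disjoint $Z_j\subseteq\Z$ so that, for each $j$, no nonzero $g\in L^2(\T)$ supported on $A$ has its spectrum contained in $\Z\setminus Z_j$.

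\medskip\noindent\emph{Step 2: a combinatorial skeleton.} I would take each $Z_j$ to be a union of finite blocks of consecutive integers, the blocks (over all $j$ and all stages) pairwise disjoint, with two features: (i) for each fixed $j$ the block lengths used for $Z_j$ are unbounded; (ii) the blocks grow fast enough along $\Z$ that every complement $\Z\setminus Z_j$, written as the union of the maximal runs of consecutive integers it contains, has those runs lacunarily separated — concretely, if the $m$-th block of the skeleton occupies roughly $[2^m,2^{m+1}]$, then between two consecutive runs of $\Z\setminus Z_j$ there sits at least one block of $Z_j$, so the ratio of the starting point of the later run to the endpoint of the earlier one is $\ge 2$. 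Such a skeleton is laid down by a diagonal enumeration of $\N\times\N$ in which each $j$ occurs infinitely often; one can also arrange $\bigcup_j Z_j=\Z$.

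\medskip\noindent\emph{Step 3: completeness of each $Z_j$.} Fix $j$ and suppose $g\in L^2(\T)$ is supported on $A$ with $\mathrm{spec}(\hat g)\subseteq\Z\setminus Z_j$. By construction $\mathrm{spec}(\hat g)$ lies in a union of separated integer intervals $J_1<J_2<\cdots$ with $\inf_i(\text{start of }J_{i+1})/(\text{end of }J_i)>1$. Grouping the Fourier series of $g$ by these intervals, $g=\sum_i e^{2\pi i n_i t}Q_i(t)$ with $\deg Q_i<|J_i|$ and Hadamard-type gaps between the frequency blocks, so $g$ is a lacunary series with polynomial coefficient-blocks; a nonzero such series cannot vanish on a set of positive measure (Zygmund's uniqueness theorem for lacunary series and its extension to lacunary polynomial blocks). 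Since $|A|<1$, $g$ vanishes on the positive-measure set $[0,1]\setminus A$, hence $g\equiv0$. Equivalently one argues via complex analysis: $G(z)=\int_0^1 g(t)e^{-2\pi i zt}\,dt$ is entire of exponential type $\le2\pi$, lies in $L^2(\R)$, and satisfies $G|_\Z=\hat g$; splitting its spectrum at one long gap expresses $G$ near that gap as a Hardy-space function plus an anti-analytic one, and the F. and M. Riesz theorem forces $G\equiv0$ because $g$ vanishes on a set of positive measure.

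\medskip\noindent\emph{Main obstacle.} Everything hinges on Step 3 — the precise "gap condition" under which a nonzero function with spectral gaps is prevented from vanishing on a positive-measure set, and the verification that the purely combinatorial skeleton of Step 2 can always be made to meet it. The essential point is that here only $|A|<1$ is used, through the \emph{lengths} of the blocks rather than their placement relative to $A$ — in contrast with the discrete uniqueness sets of Theorems~1 and \ref{t1}, where periodicity of the gaps is essential.
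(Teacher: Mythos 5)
Your Step 3 is where the argument breaks down, and it is a genuine gap, not a presentational one. After deleting the lacunarily placed blocks of $Z_j$, the spectrum $\Z\setminus Z_j$ is not a lacunary set, nor a union of admissible ``polynomial blocks'': its runs $J_i$ have unbounded lengths, comparable to or much larger than their distance from the origin (each run swallows all the skeleton blocks assigned to the other $Z_{j'}$ in between), and with the blocks placed as in your example (inside $[2^m,2^{m+1}]$, $m\in\N$) the spectrum even contains the entire negative half-line. Zygmund's uniqueness theorem and its known block extensions require the spectrum itself to be sparse --- Hadamard frequencies, or blocks with a bounded number of terms or small relative length --- so they say nothing about such $g$. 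The alternative one-gap sketch via F.\ and M.\ Riesz also fails: splitting at a single gap only writes $g=g_-+g_+$ with one-sided spectra, and a sum of an analytic and an anti-analytic $L^2$ function can perfectly well vanish on a set of positive measure (every $L^2$ function on $\T$ is such a sum), while F.\ and M.\ Riesz applies to neither summand separately. What your skeleton actually needs is a Beurling-type gap theorem: uniqueness on sets of positive measure for functions whose spectral gaps $(a_k,b_k)$ satisfy a divergence condition of the type $\sum_k (b_k-a_k)/b_k=\infty$. That is a much deeper result than anything you invoke, it is not what Zygmund's theorem gives, and you would still have to transfer it to the circle and deal with the two-sided/negative-frequency structure. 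As written, the key step is unsupported, and with it the lemma.

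For comparison, the paper's proof needs no uniqueness theorem for the complements $\Z\setminus Z_j$ at all; it runs in the opposite, elementary direction. One first observes that $\{e^{-2\pi int}:|n|>N\}$ is complete in $L^2(A)$, because a function in $L^2(A)$ orthogonal to it would be a trigonometric polynomial vanishing on the positive-measure set $[0,1]\setminus A$. Using this, one builds pairwise disjoint finite blocks $\Gamma_k\subset\Z$ such that every exponential $e^{2\pi imt}$ with $|m|\le k$ is $\epsilon_k$-approximable in $L^2(A)$ by a polynomial with frequencies in $\Gamma_k$, and sets $Z_j=\bigcup_{k\in\Delta_j}\Gamma_k$ for a partition of $\N$ into infinite sets $\Delta_j$; the closed span of each system then contains every exponential, hence equals $L^2(A)$. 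The only uniqueness fact used is that a trigonometric polynomial cannot vanish on a set of positive measure. To salvage your route you would have to quote and adapt a genuine gap theorem of Beurling type; otherwise an approximation argument of the paper's kind is the natural fix.
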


\begin{proof}
1. Observe that the exponential family
\begin{equation}\label{sys}\{e^{-i2\pi nt}, |n| > N\} \end{equation}is complete in $L^2(A)$,  for every natural $N$. Indeed, assume there exists a non-trivial function  $F\in L^2(A)$   orthogonal  to the system (\ref{sys}). Extend $F$ by zero to $[0,1]\setminus A$. Then $F$ is orthogonal  to the system (\ref{sys}) in $L^2(0,1)$.
 Since the trigonometrical system forms an orthonormal basis in $L^2(0,1)$, we conclude that $F$ is a trigonometric polynomial:
$$
F(t)=\sum_{|n|\leq N}c_je^{-i2\pi n t}.
$$Clearly, $F$ cannot vanish on the set  of positive measure $[0,1]\setminus A$, which is a contradiction.

2. Fix a sequence $\epsilon_k, k\in\N$, satisfying $\epsilon_k\to0, k\to\infty.$ We will now construct a sequence of disjoint finite symmetric sets $\Gamma_k\subset\Z, k\in \N,$ with the following property:
For every $|m|\leq k,$ there is a trigonometric polynomial $P_{k,m}$ whose frequencies belong to $\Gamma_k$, such that
\begin{equation}\label{s}
\|e^{i2\pi m t}-P_{k,m}(t)\|_{L^2(A)}<\epsilon_k.
\end{equation}

Set $\Gamma_1:=\{-1,0,1\}$.  Clearly, (\ref{s}) holds with $m=0,-1,1$.
Then set$$
\Gamma_k:=\{n:n_{k-1}<|n|\leq n_k\},
$$where  $n_1=1$, and we choose $n_j, j>1,$ inductively as follows:
By Step 1, there exists $n_2$ so large that for every $|m|\leq 2$ there is a polynomial
$P_{2,m}$ satisfying  (\ref{s}) with $k=2$ and whose frequencies belong to the set $\Gamma_2$,
and so on. On the $k$-th step, we choose an integer $n_k$ so large that for every $|m|\leq k$ there is a polynomial
$P_{k,m}$ satisfying  (\ref{s}) and whose frequencies belong to the set $\Gamma_k$.

3. Now, take a partition of $\N$ into disjoint infinite subsets $\Delta_j$ and set
$$
Z_j:=\bigcup_{k\in \Delta_j}\Gamma_k.
$$
It follows from the construction above that every exponential system (\ref{exps}) 
is complete in $L^2(A)$.
\end{proof}

\begin{remark} It is easy to see that  one may construct the sets $Z_j$ so that  $$D(\cup_{j=1}^\infty Z_j)=1.$$
\end{remark}

\subsection{Periodization and Fourier Transform}

              For an integrable function $H$ on the circle group $\T:=\R/\Z$,
               we denote by$$
               c_n(H) := \int_\T H(t) e^{2\pi i nt}\, dt,\quad n\in\Z,
$$            the Fourier coefficients of $H$.

               Given  $F\in L^1 (\R)$, consider
              its "periodization"
$$
                H(u):= \sum_{k\in\Z}  F(u+k), \quad    u\in [0,1].
$$
            Clearly,  $H$ is defined a.e. and belongs to $L^1(\T)$.
              Direct calculation shows that its Fourier coefficients satisfy $                     c_n (H) = f(n),$ where $f$ is the inverse Fourier transform of $F$.

              Similarly, for the periodization $H_v$  of the function
$$                     F_v(t):=   e^{2\pi i vt} F(t),
$$
              we have
\begin{equation}\label{ol}
                    c_n (H_v)= f(n+v), \quad n\in\Z.    \end{equation}

It is easy to check that the periodization of an $L^2$-function does not always belong to $L^2(\T)$.
However, the following is true:

\begin{lemma}
Assume $F$ satisfies $\|F\|_\alpha<\infty.$   Then 
$$
\int_0^1 |H(t)|^2\,dt<\infty.
$$
\end{lemma}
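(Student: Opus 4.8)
The plan is to bound the $L^2(\T)$ norm of the periodization $H$ by $\|F\|_\alpha$ using the Fourier coefficient description. Recall that $c_n(H)=f(n)$, where $f=\check F$ is the inverse Fourier transform of $F$. By Parseval on the circle, $\int_0^1|H(t)|^2\,dt=\sum_{n\in\Z}|f(n)|^2$, so it suffices to prove that $\sum_{n\in\Z}|f(n)|^2<\infty$ whenever $\|F\|_\alpha<\infty$.

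First I would observe that since $\|F\|_\alpha<\infty$ forces $F\in L^1(\R)$ (as noted in the excerpt, because $(1+|t|^{2\alpha})^{-1/2}\in L^2(\R)$ for $\alpha>1/2$ and we apply Cauchy--Schwarz), the function $f=\check F$ is continuous and bounded, so the pointwise samples $f(n)$ are well defined. The natural tool to convert the sum $\sum_n|f(n)|^2$ into something controlled by $\|F\|_\alpha$ is the periodization identity once more, but applied in the ``other direction'': for the shifted periodizations $H_v$ one has $c_n(H_v)=f(n+v)$, and integrating the Parseval identity $\sum_n|f(n+v)|^2=\int_0^1|H_v(t)|^2\,dt$ over $v\in[0,1]$ (together with the fact that $\{f(n+v)\}_{n,v}$ recovers all of $f$) gives $\sum_{n\in\Z}\sup\ldots$; more cleanly, I would instead directly split $\sum_{n\in\Z}|f(n)|^2$ by summing over $n$ and estimating $|f(n)|$ through $F$.

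The cleanest route: write $f(n)=\int_\R F(t)e^{2\pi i nt}\,dt$ and use that $F(t)=(1+|t|^{2\alpha})^{-1/2}G(t)$ with $G\in L^2(\R)$, $\|G\|_{L^2}=\|F\|_\alpha$. Then $\{e^{2\pi i nt}(1+|t|^{2\alpha})^{-1/2}\}_{n\in\Z}$, viewed as a family in $L^2(\R)$, is a Bessel sequence: indeed $\sum_{n\in\Z}|f(n)|^2=\sum_{n\in\Z}\bigl|\int_\R G(t)\,e^{2\pi i nt}(1+|t|^{2\alpha})^{-1/2}\,dt\bigr|^2$, and by splitting $\R=\bigcup_{k\in\Z}[k,k+1)$ and applying Parseval on each unit interval to $e^{2\pi int}$ one gets $\sum_n|f(n)|^2\le C\sum_{k\in\Z}\int_k^{k+1}\frac{|G(t)|^2}{1+|t|^{2\alpha}}\cdot(1+\text{const})\,dt$, which is finite because $(1+|t|^{2\alpha})^{-1}$ is bounded. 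Concretely: for a function $\Phi\in L^1(\R)\cap L^2(\R)$ one has $\sum_{n\in\Z}\bigl|\int_\R\Phi(t)e^{2\pi int}\,dt\bigr|^2=\int_0^1\bigl|\sum_{k\in\Z}\Phi(t+k)\bigr|^2dt\le\int_0^1\bigl(\sum_k|\Phi(t+k)|\bigr)^2dt$, and with $\Phi=F$, the inner $\ell^1$-sum is handled by Cauchy--Schwarz against the convergent series $\sum_k(1+|t+k|^{2\alpha})^{-1}$ (convergent since $2\alpha>1$), yielding $\sum_k|F(t+k)|^2(1+|t+k|^{2\alpha})$ times a uniformly bounded constant, whose integral over $[0,1]$ is exactly (a constant times) $\|F\|_\alpha^2$.

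The main obstacle is the interchange of sum and integral and making the Cauchy--Schwarz splitting rigorous: one must be careful that $\sum_{k\in\Z}F(t+k)$ converges absolutely for a.e.\ $t$ and that the resulting estimate $\bigl(\sum_k|F(t+k)|\bigr)^2\le\bigl(\sum_k(1+|t+k|^{2\alpha})^{-1}\bigr)\bigl(\sum_k|F(t+k)|^2(1+|t+k|^{2\alpha})\bigr)$ integrates to something finite — this is where $\alpha>1/2$ is used, guaranteeing $\sum_{k\in\Z}(1+|t+k|^{2\alpha})^{-1}\le C_\alpha<\infty$ uniformly in $t\in[0,1]$. Once that bound is in place, Tonelli lets us integrate term-by-term and $\int_0^1\sum_k|F(t+k)|^2(1+|t+k|^{2\alpha})\,dt=\int_\R|F(t)|^2(1+|t|^{2\alpha})\,dt=\|F\|_\alpha^2$, completing the proof with $\int_0^1|H(t)|^2\,dt\le C_\alpha\|F\|_\alpha^2<\infty$.
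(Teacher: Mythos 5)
Your final argument --- Cauchy--Schwarz on the periodization sum $\sum_k F(t+k)$ against the uniformly convergent series $\sum_k(1+|t+k|^{2\alpha})^{-1}$ (this is where $\alpha>1/2$ enters), followed by Tonelli to identify the integral with $\|F\|_\alpha^2$ --- is correct and is essentially the paper's own proof, which performs the same Cauchy--Schwarz splitting with weights $(1+|n|^\alpha)$. The opening detour through $c_n(H)=f(n)$, Parseval and Bessel sequences is harmless but unnecessary, since your pointwise bound $|H(t)|^2\leq C_\alpha\sum_k|F(t+k)|^2(1+|t+k|^{2\alpha})$ integrates directly to the stated conclusion.
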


\begin{proof}
Indeed, we have
$$
|H(t)|^2=\left|\sum_{n\in\Z}F(t+na)\right|^2\leq \sum_{n\in\Z}\frac{1}{(1+|n|^\alpha)^2}\sum_{n\in\Z}|F(t+na)|^2(1+|n|^\alpha)^2,
$$and the lemma easily follows from the definition of $\|F\|_\alpha$ in (\ref{4}).
\end{proof}

\subsection{Proof of Theorem 3}

              By re-scaling we can assume that $a=1$.

            Using Lemma 1 with $A=S_1$, write $\Z= \bigcup_{j=1}^\infty Z_j$, where each exponential system (\ref{exps}) is  complete in $L^2(S_1)$. It means that each $Z_j$ is a uniqueness
set for the space $PW_{S_1}$.

Fix a sequence $\{\alpha_j\}$ dense in $[0,1],$ and set
\begin{equation}\label{ol2}
              \La:= \bigcup_{j=1}^\infty (Z_j + \alpha_j).
\end{equation}We may assume that $\La$ is u.d. and $D(\La)=1.$

            Now we will prove that $\La$ is a uniqueness set for the space $W_S^{(\alpha)}$. We have to show that every function  $f\in W_S^{(\alpha)}$ satisfying
\begin{equation}\label{ol3}
                     f|_\La = 0
\end{equation}
         must vanish on  $\R$.

         Let $F:=\hat f$. Fix $j\in\N$ and  consider the function
$$
               F_j(t):=  e^{2\pi \alpha_j t} F(t),
$$
             and its periodization $H_j$.         Recall that $F$ vanishes a.e. outside $S$. Since $S\subset S_1+\Z$,  we have
$$
                     H_j = 0 \quad \mbox{a.e. on } \T\setminus S_1.
$$                    Also, by Lemma 2, $H_j \in L^2 (\T)$.

          By (\ref{ol}), (\ref{ol2}) and (\ref{ol3}),
$$
               c_n (H_j)= f(\alpha_j +n)  = 0,\quad  n\in Z_j.
$$Since $Z_j$ is a uniqueness
set for $PW_{S_1}$, we have $H_j=0$ a.e.
  By (\ref{ol}), this means that
$$
                  f(n+\alpha_j) = 0,\quad   n\in \Z.
$$
    Since this equality   is true for all $j $, $f$ is continuous and the sequence $\{\alpha_j\}$ is dense in $[0,1]$, we conclude that $ f= 0.$

\section{Uniqueness Sets for  Fast Decreasing Functions}
 Theorem \ref{t1} shows that certain classes of smooth functions $f$ having periodic weak spectral gaps admit u.d. uniqueness sets.
In this section we show that  a similar  result holds for  functions  $f$ whose Fourier transforms $F$ are smooth functions.

Let us denote by $Y$  the space of continuous functions $f$ satisfying
\begin{equation}\label{dec}
\sup_{x\in\R}(1+x^2)|f(x)|<\infty.
\end{equation}
Denote by $Y_S$ the subspace of $Y$ of functions $f$ such that $F=\hat f=0$ outside $S$.

\begin{theorem}\label{t2}
Suppose a set $S$ satisfies $|S_a|<a$, for some $a>0$. Then there is a u.d. set $\La$ of density $D(\La)=a$, which is a uniqueness set for  $Y_{S}$.
\end{theorem}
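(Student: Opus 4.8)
The plan is to mimic the proof of Theorem~\ref{t1}, replacing the role of Lemma~2 (which guaranteed that the periodization of a Sobolev function lies in $L^2(\T)$) by a corresponding statement for the space $Y$. After re-scaling we may assume $a=1$. Apply Lemma~1 with $A=S_1$ to write $\Z=\bigcup_{j=1}^\infty Z_j$, where each exponential system $\{e^{-i2\pi nt},n\in Z_j\}$ is complete in $L^2(S_1)$, i.e.\ each $Z_j$ is a uniqueness set for $PW_{S_1}$. Fix a sequence $\{\alpha_j\}$ dense in $[0,1]$ and set $\La:=\bigcup_{j=1}^\infty(Z_j+\alpha_j)$, which (using the Remark after Lemma~1, and discarding repetitions) may be taken u.d.\ with $D(\La)=1$.

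The key structural point is that if $f\in Y_S$, then $f$ decays like $(1+x^2)^{-1}$ on $\R$, hence $f\in L^1(\R)\cap L^2(\R)$; therefore $F=\hat f\in L^2(S)$ and $f\in PW_S$. Moreover, for each $j$, the function $F_j(t):=e^{2\pi i\alpha_j t}F(t)$ again satisfies $\widehat{F_j}{}^{\vee}(x)=f(x+\alpha_j)$, so its periodization $H_j(u)=\sum_{k\in\Z}F_j(u+k)$ has Fourier coefficients $c_n(H_j)=f(n+\alpha_j)$. The needed integrability of $H_j$ is immediate here: since $|f(x+\alpha_j)|\le C(1+x^2)^{-1}$, the sequence $(c_n(H_j))_{n\in\Z}=(f(n+\alpha_j))_{n\in\Z}$ is in $\ell^2(\Z)$ (indeed in $\ell^1$), so by Parseval $H_j\in L^2(\T)$. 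This replaces Lemma~2 with no work beyond the decay hypothesis \eqref{dec}. As in the proof of Theorem~3, because $F$ vanishes a.e.\ outside $S\subset S_1+\Z$, we get $H_j=0$ a.e.\ on $\T\setminus S_1$, so $H_j\in PW_{S_1}$ (viewing it via its $\ell^2$ Fourier coefficients). The hypothesis $f|_\La=0$ gives $c_n(H_j)=f(n+\alpha_j)=0$ for all $n\in Z_j$; since $Z_j$ is a uniqueness set for $PW_{S_1}$, we conclude $H_j\equiv0$, i.e.\ $f(n+\alpha_j)=0$ for all $n\in\Z$. Letting $j$ range over all indices and using that $\{\alpha_j\}$ is dense in $[0,1]$ together with the continuity of $f$, we obtain $f\equiv0$ on $\R$.

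The only point requiring a little care — and what I expect to be the main (minor) obstacle — is the passage from "$(c_n(H_j))\in\ell^2$ with $c_n(H_j)=0$ for $n\in Z_j$ and $H_j=0$ off $S_1$" to "$H_j\equiv0$." This is exactly the statement that $Z_j$ is a uniqueness set for $PW_{S_1}$ in the sense already used in the proof of Theorem~3: an $L^2(S_1)$ function whose restriction to the integer grid vanishes on $Z_j$ must be the zero function, which is the completeness of $\{e^{-i2\pi nt}:n\in Z_j\}$ in $L^2(S_1)$ from Lemma~1. Everything else is a verbatim repetition of the argument in Section~3.5, with $\|F\|_\alpha<\infty$ and Lemma~2 replaced by the pointwise decay bound \eqref{dec}; in fact the proof of Theorem~\ref{t2} is strictly easier at that step because the decay immediately forces $f$ into $PW_S$ and the Fourier coefficients into $\ell^1$.
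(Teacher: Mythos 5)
There is a genuine gap, and it sits exactly at the point where the paper's proof of Theorem~\ref{t2} has to depart from that of Theorem~\ref{t1}. You define $H_j$ as the pointwise periodization $\sum_{k\in\Z}F_j(u+k)$ of $F_j(t)=e^{2\pi i\alpha_j t}F(t)$ and then invoke the identity $c_n(H_j)=f(n+\alpha_j)$ together with ``as in the proof of Theorem 3, $H_j=0$ a.e.\ on $\T\setminus S_1$''. But for $f\in Y_S$ the spectrum $S$ may have infinite measure, so $F$ is only in $L^2\cap C_0$ and not in $L^1(\R)$; the series $\sum_k F_j(u+k)$ need not converge, and the paper explicitly flags this (``the periodization of $F$ cannot be defined pointwisely''). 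Formula (\ref{ol}) was established for $F\in L^1(\R)$ via Fubini, and the support argument in the proof of Theorem 3 likewise used that the periodization is an actual a.e.-convergent sum of functions each vanishing off $S_1+\Z$; in Theorem 3 all of this was rescued by the Sobolev weight through Lemma 2, which is unavailable here. Your remark that the decay of $f$ makes this step ``immediate'' confuses two different objects: the $\ell^1$ bound on the samples $f(n+\alpha_j)$ lets you \emph{define} a periodic function, but it says nothing about that function agreeing with a periodization of $F_j$ or vanishing off $S_1$.

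What is legitimate in your argument is to define the periodic object directly from the samples: since $(f(n+\alpha_j))_{n\in\Z}\in\ell^1(\Z)$, the function $g_j(t):=\sum_{n\in\Z}f(n+\alpha_j)e^{-2\pi i nt}$ is a well-defined continuous $1$-periodic function, and $f|_{Z_j+\alpha_j}=0$ makes it orthogonal to the exponentials with frequencies in $Z_j$. The missing, essential step is to prove that $g_j$ vanishes on $[0,1]\setminus S_1$, i.e.\ that the Poisson identity survives in a limiting sense even though the pointwise periodization does not exist. This is precisely the content of the paper's Lemma~\ref{l3}: one applies the classical Poisson summation formula to the mollified function $f\ast h_\epsilon$, whose Fourier transform $F\,\hat h_\epsilon$ is integrable and still vanishes on the gap set $Q+\Z$, and then lets $\epsilon\to0$, using the decay (\ref{dec}) to control the tails of the sum uniformly. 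Without this lemma (or an equivalent distributional argument) the claim ``$H_j=0$ off $S_1$'' is unsupported, so your assessment that the only delicate point is the uniqueness step, and that this proof is ``strictly easier'' than that of Theorem 3, is not correct: the Poisson-summation lemma is the crux. Once it is supplied, the rest of your argument coincides with the paper's.
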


The proof below  shows that in Theorem \ref{t2} condition (\ref{dec}) in the definition of $Y$ can be somewhat relaxed. However, the result is no longer true if no decay condition is imposed, see Theorem 6 below.

\subsection{Proof of Theorem \ref{t2}}

         The proof follows the same idea used in the proof of  Theorem 3. However,  the periodization of $F$ cannot  be defined pointwisely.

          We will use the following corollary of the classical          Poisson summation formula:

\begin{lemma}\label{l3}  Assume   a continuous function $f$ satisfies (\ref{dec}) and $\hat f(t)=0, t\in Q+\Z$, for some set $Q\subset[0,1], |Q|>0$. Then for every $x\in [0,1]$ we have
\begin{equation}\label{pp1}
\sum_{n\in\Z}f(x+n)e^{-i2\pi nt}=0, \quad t\in Q.
\end{equation}
\end{lemma}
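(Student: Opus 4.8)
The plan is to deduce the identity \eqref{pp1} from the Poisson summation formula applied to a suitably smoothed version of $f$. The function $f$ itself satisfies the decay \eqref{dec}, but to justify Poisson summation pointwise one typically wants control on $\hat f$ as well, which is not available here; the way around this is to convolve $f$ with a mollifier on the Fourier side, i.e. multiply $F=\hat f$ by a smooth compactly supported bump $\varphi_\varepsilon$ that equals $1$ on a large interval. First I would fix $x\in[0,1]$ and consider, for each $\varepsilon>0$, the function $g_\varepsilon$ whose Fourier transform is $G_\varepsilon(t):=F(t)\varphi(\varepsilon t)$, where $\varphi$ is a fixed $C^\infty$ function with $\varphi(0)=1$ and compact support. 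Then $g_\varepsilon$ is the convolution of $f$ with a nice integrable kernel, hence $g_\varepsilon$ is continuous and, because $f$ decays like $(1+x^2)^{-1}$ and the kernel decays fast, $g_\varepsilon$ inherits the same kind of decay uniformly in small $\varepsilon$; in particular $\sum_{n}|g_\varepsilon(x+n)|$ converges and the Poisson summation formula is legitimately applicable to $g_\varepsilon$.

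The next step is to write down Poisson summation for $g_\varepsilon$ in the ``twisted'' form: for every $t\in[0,1]$,
$$
\sum_{n\in\Z} g_\varepsilon(x+n)e^{-i2\pi n t}=\sum_{k\in\Z}G_\varepsilon(t+k)e^{2\pi i (t+k)x},
$$
which is just the periodization identity \eqref{ol} combined with the Fourier inversion; the right-hand side is the value at $x$ of the inverse Fourier transform of the periodization of $G_\varepsilon$. Now observe that $G_\varepsilon$ is supported where $F$ is, so $G_\varepsilon$ vanishes on $Q+\Z$; hence for $t\in Q$ every term $G_\varepsilon(t+k)$ on the right is zero, and we obtain $\sum_{n\in\Z} g_\varepsilon(x+n)e^{-i2\pi n t}=0$ for all $t\in Q$ and all $\varepsilon>0$.

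Finally I would let $\varepsilon\to0$. Since $\varphi(\varepsilon t)\to\varphi(0)=1$ pointwise and $|\varphi(\varepsilon t)|$ is bounded, dominated convergence on the Fourier side gives $g_\varepsilon(x+n)\to f(x+n)$ for each fixed $n$ (indeed $g_\varepsilon\to f$ uniformly on compacts). Combined with a uniform-in-$\varepsilon$ tail bound $|g_\varepsilon(x+n)|\le C(1+n^2)^{-1}$ coming from the decay estimate above, dominated convergence in the summation index $n$ lets me pass to the limit termwise, yielding $\sum_{n\in\Z} f(x+n)e^{-i2\pi n t}=0$ for $t\in Q$, which is exactly \eqref{pp1}. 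The main obstacle, and the step deserving the most care, is the uniform decay bound for the mollified functions $g_\varepsilon$: one must check that convolving $f$ against the rescaled kernel $\widehat{\varphi}(\,\cdot\,/\varepsilon)/\varepsilon$ does not destroy the quadratic decay, uniformly for $\varepsilon\in(0,1]$, so that both limit passages (pointwise convergence of terms and summability of the series) are justified simultaneously; everything else is a routine invocation of Poisson summation and the periodization formula \eqref{ol} established earlier.
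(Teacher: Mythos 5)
Your proof is correct and follows essentially the same route as the paper: regularize $f$ so that Poisson summation legitimately applies, note that the regularized Fourier transform still vanishes on $Q+\Z$ so the Fourier side of the twisted Poisson identity is zero for $t\in Q$, and pass to the limit $\varepsilon\to 0$ using a uniform-in-$\varepsilon$ quadratic decay bound for the tails together with pointwise convergence of the individual terms; the paper does exactly this, merely with a different mollifier (convolution with the compactly supported triangle kernel $h_\epsilon$, i.e.\ a Fej\'er-type factor on the Fourier side) in place of your smooth compactly supported cutoff $\varphi(\varepsilon t)$. One small repair: since $F=\hat f$ is only bounded and continuous, not necessarily integrable, the convergence $g_\varepsilon(x+n)\to f(x+n)$ should be justified not by ``dominated convergence on the Fourier side'' but by the approximate-identity argument you already set up, namely $g_\varepsilon=f\ast k_\varepsilon\to f$ locally uniformly because $k_\varepsilon$ is integrable with integral one and concentrates at the origin.
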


       \begin{proof}   If $F=\hat f$ is also fast decreasing, then  this claim follows
                   directly from the Poisson formula.

                     Otherwise, apply the Poisson formula to the convolution
$(f\ast h_\epsilon)(x)$, where $$h_\epsilon:=\left(\frac{1}{2\epsilon}{\bf 1_{(-\epsilon,\epsilon)}}\right)^{2\ast}$$ and ${\bf 1_{(-\epsilon,\epsilon)}}$ is the indicator function of $(-\epsilon,\epsilon)$:
$$
                  \sum_{n\in\Z} (f*h_\epsilon)(x+n)e^{-i2\pi  nx} = 0, \quad  x\in [0,1),\ t\in Q.
$$

                  Now, we claim:
$$
                 \sum_{n\in\Z} (f\ast h_\epsilon)(x+n)e^{-i2\pi nt}  \to \sum_{n\in\Z}  f(x+n)e^{-i2\pi nt}
  $$               as $\epsilon\to 0$, which proves the lemma.
Indeed, fix $\delta>0$ and decompose the left side into two sums: $
                  \sum_{|n|< N} + \sum_{ |n|\geq N}.$
                  One can chose $N= N(\delta)$ so that modulus of the second
                  summand is $< \delta$, for every $x,t\in[0,1]$ and $0<\epsilon<1$.  Clearly, each  term of the first summand goes to $f(x)e^{-i2\pi nt}$ as $\epsilon\to 0,$
                  due to the continuity of $f$.
\end{proof}

                  Now, we can finish the proof of Theorem 4.
 By re-scaling, we may assume that $a=1$,  so that  $|S_1|<1$.

 Following the proof of Theorem 3, we may find pairwise disjoint sets              $Z_j\subset\Z, j\in\N,$  such that for every $j$ the system
 $$E(Z_l):=\{e^{-2\pi i kt}, k\in Z_l\}$$ is complete in $L^2(S_1)$.

Set
  $$
  \La:=\cup_{j\in\N}(Z_j+\alpha_j),
  $$where  $\{\alpha_l, l\in\N\}$ is dense in $(0,1)$.
             It remains to check that $\La$ is a uniqueness set for $Y_S$.

     Assume $f|_\La = 0$, for some $f\in Y_S$, i.e. we have
$$
                   f|_{Z_j + \alpha_j} = 0, \quad j=1,2,\dots
$$

           Fix $j$ and consider a $1$-periodic function
$$
                g_j (x) :=\sum_{n\in \Z}  f(n + \alpha_j) e^{-i 2\pi nx}.
$$
          Clearly, $g\in L^2(0,1)$ and is orthogonal in $L^2(0,1)$ to all the exponential functions in $E(Z_j)$.   On the other hand, due to Lemma 3,
$$
                    g_j(x) = 0,\quad  t\in Q:=[0,1]\setminus S_1.
$$
      The completeness of $E(Z_j)$ in $L^2(S_1)$ implies that  $g_j = 0$ a.e. Hence,  $$f(n+\alpha_j) = 0,\quad  n\in\Z.$$
          This is true for every $j$.           Recalling that $\{\alpha_j\}$ is dense on $[0,1]$ and  $f \in C(\R) $,
          we conclude that  $f=0$ on $\R$.

\section{Distributions with Periodic Spectral Gaps}

\subsection{Strong Gaps}

           If  $S$ has periodic strong gaps, then
               the results above can be extended  to  wider             function spaces.

       Denote by $X$  the space of continuous functions that have at most polynomial growth on $\R$. Every element  $f\in X$ is a  Schwartz distribution. Its spectrum is the minimal closed set $S$ such that for every test function $\varphi$ satisfying $\hat \varphi=0$ in a neighbourhood of $S$, we have
$$
\int_\R f(t)\varphi(t)\,dt=0.
$$

Given  closed set $S$, we denote by $X_S$ the subspace of $X$ consisting of functions with spectrum in $S$.

Without loss of generality, we may assume       the spectral gaps are $[0,\delta] + \Z.$

\begin{theorem}\label{t3} There is a u.d. set $\La, D(\La) =1 $,   which is a uniqueness set for $X_S,  S = [0, 1-\delta]+\Z$,  for every $0<\delta<1.$
\end{theorem}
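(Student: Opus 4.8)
The plan is to follow the scheme of Theorems \ref{t1} and \ref{t2}, now reading the relevant periodization as a distribution on $\T=\R/\Z$ and building a little slack into the choice of $\La$. After re-scaling we may assume $a=1$, so that the gap is $(1-\delta,1)+\Z$ and $S_1=[0,1-\delta]$. Fix $\delta'\in(0,\delta)$, concretely $\delta'=\delta/2$, and apply Lemma 1 with $A=[0,1-\delta']$ (which has $|A|<1$) to produce pairwise disjoint symmetric sets $Z_j\subset\Z$, $j\in\N$, such that each system $E(Z_j)=\{e^{-2\pi int}:n\in Z_j\}$ is complete in $L^2([0,1-\delta'])$. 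Because $L^2$-completeness of a family of exponentials over an interval passes to every sub-interval and is invariant under translating the interval, each $E(Z_j)$ is in fact complete in $L^2(I)$ for \emph{every} interval $I$ with $|I|\le 1-\delta'$. Finally, as in Theorems \ref{t1} and \ref{t2}, choose $\{\alpha_j\}$ dense in $[0,1]$ and set $\La:=\bigcup_j(Z_j+\alpha_j)$, arranged so that $\La$ is u.d. with $D(\La)=1$.

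Let $f\in X_S$ with $f|_\La=0$; we must show $f\equiv0$. Fix $j$. Since $f$ has polynomial growth, $\{f(n+\alpha_j)\}_{n\in\Z}$ is polynomially bounded and hence defines a $1$-periodic distribution
$$
g_j:=\sum_{n\in\Z}f(n+\alpha_j)\,e^{-2\pi int},\qquad c_n(g_j)=f(n+\alpha_j).
$$
As $Z_j+\alpha_j\subset\La$ and $Z_j$ is symmetric, $c_n(g_j)=0$ for all $n\in Z_j$. I claim that $g_j$ vanishes on the open arc $(1-\delta,1)$, i.e. $\text{supp}\,g_j\subset[0,1-\delta]\pmod1$. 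Conceptually this is a distributional Poisson summation formula, $g_j$ being the $\Z$-periodization of $e^{2\pi i\alpha_jt}\widehat f(t)$, whose support lies in $S=[0,1-\delta]+\Z$. To establish it in the elementary spirit of Lemma \ref{l3}, convolve $f$ with a Schwartz approximate identity $\varphi_\varepsilon$ whose Fourier transform is compactly supported and tends to $1$; then $f\ast\varphi_\varepsilon$ is smooth, it and all its derivatives have polynomial growth, and its spectrum is contained in a compact subset of $S$. For any $\psi\in C_c^\infty((1-\delta,1))$ the function $t\mapsto(f\ast\varphi_\varepsilon)(t+\alpha_j)\,\widehat\psi(t)$ is Schwartz, so the classical Poisson formula applies to it; since the gap is \emph{open}, the support of the Fourier transform of this product avoids $\Z$, and therefore $\sum_n(f\ast\varphi_\varepsilon)(n+\alpha_j)\,\widehat\psi(n)=0$. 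Passing $\varepsilon\to0$ (dominated convergence in $n$, using the uniform polynomial bound for $f\ast\varphi_\varepsilon$) gives $\langle g_j,\psi\rangle=0$, proving the claim.

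Now I pass from $\text{supp}\,g_j\subset[0,1-\delta]\pmod1$ together with $c_n(g_j)=0$ on $Z_j$ to $g_j=0$. Let $\chi_\eta\in C^\infty(\T)$ be a nonnegative approximate identity supported in $(-\eta,\eta)$ with $0<\eta\le\delta/4$. Then $g_j\ast\chi_\eta$ is a continuous function on $\T$ supported in an arc of length $1-\delta+2\eta\le1-\delta/2=1-\delta'$; viewing that arc as an interval $I_\eta$, we get $g_j\ast\chi_\eta\in L^2(I_\eta)$. Moreover $c_n(g_j\ast\chi_\eta)=c_n(g_j)c_n(\chi_\eta)=0$ for $n\in Z_j$, so $g_j\ast\chi_\eta$ is orthogonal to $E(Z_j)$ in $L^2(I_\eta)$; by the completeness recorded above, $g_j\ast\chi_\eta\equiv0$. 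Letting $\eta\to0$ yields $g_j=0$, hence $f(n+\alpha_j)=0$ for all $n\in\Z$. Since this holds for every $j$, $\{\alpha_j\}$ is dense in $[0,1]$, and $f$ is continuous, we conclude $f\equiv0$ on $\R$.

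The crux is the distributional Poisson step of the second paragraph: unlike in Theorems \ref{t1}--\ref{t2}, the periodization of $f$ need not converge even pointwise, so Lemma \ref{l3} is not directly applicable; the remedy is to mollify on the frequency side, so that $f\ast\varphi_\varepsilon$ becomes an entire function of exponential type with polynomial growth for which Poisson is legitimate, the \emph{open} periodic gap then ensuring that no Poisson mass sits on $\Z$. This, along with the enlargement of $S_1$ to the strictly larger interval $[0,1-\delta']$ used in the last paragraph, is exactly where the hypothesis of a \emph{strong} (rather than merely weak) periodic gap is needed — in line with Theorem 6, which shows the statement fails for generic weak gaps.
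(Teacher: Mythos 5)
Your argument is correct for each fixed $\delta$, but it takes a genuinely different route from the paper's. The paper disposes of the polynomial growth in one stroke: given $f\in X_S$, it multiplies by a function $\varphi$ whose Fourier transform is a Schwartz function supported in $[0,\delta/2]$ (half the gap); then $g=f\varphi$ is rapidly decreasing, its spectrum lies in $[0,1-\delta/2]+\Z$, and $g$ vanishes on $\La$, so Theorem 4 applies directly and gives $g\equiv0$, hence $f\equiv 0$ since $\varphi$ is a nonzero entire function. You instead keep $f$ itself and upgrade the periodization machinery to the distributional level: you define $g_j$ through its polynomially bounded coefficients $f(n+\alpha_j)$, prove a distributional substitute for Lemma 3 by mollifying on the frequency side (a necessary replacement, since the pointwise periodization used in Theorems 3--4 need not converge here), and then return to an $L^2$ setting by convolving with $\chi_\eta$ on $\T$, spending half of the open gap on the support enlargement so that completeness of $E(Z_j)$ on intervals of length at most $1-\delta/2$ (obtained from $[0,1-\delta/2]$ by restriction and translation, which is legitimate) finishes the argument. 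Your Poisson step is sound: the uniform polynomial bound on $f\ast\varphi_\varepsilon$ justifies the limit, the spectrum stays in a compact subset of $S$, and the compact support of $\psi$ inside the open gap keeps the Fourier transform of the product at positive distance from $\Z$. In short, both proofs spend the strong gap in the same place — you on the smoothing $\chi_\eta$, the paper on the support of $\hat\varphi$ — but the paper's reduction to Theorem 4 is shorter, while yours is self-contained and makes the role of the open gap explicit.

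The one genuine discrepancy is the quantifier over $\delta$. The theorem asserts a single u.d.\ set $\La$ which is a uniqueness set for $X_{[0,1-\delta]+\Z}$ for \emph{every} $0<\delta<1$, and the paper's proof emphasizes that the $Z_j$ are chosen ``independently on $\delta$.'' In your construction the $Z_j$ come from Lemma 1 applied to $A=[0,1-\delta/2]$, so $\La$ depends on $\delta$, and as written you only obtain ``for every $\delta$ there exists $\La$.'' The repair is routine and compatible with everything else you do: run the induction in Lemma 1 imposing at stage $k$ the approximation property (\ref{s}) simultaneously for the finitely many sets $A_i=[0,1-1/i]$, $i\le k$; then each $E(Z_j)$ is complete in $L^2([0,1-\eta])$ for every $\eta>0$, and your final paragraph goes through, with the same $\La$, for every $\delta$ (choosing $\eta\le\delta/4$ as you already do). You should make this adjustment explicit, since without it the statement proved is strictly weaker than the theorem.
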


\begin{proof}
           Consider $Z_j$ as in Lemma 1 (this can be done independently on $\delta$). Choose  $\La$ as in the proof of Theorem 4.
          Given  $f\in  X_S,$  consider the function                      $ g:= f\cdot\varphi,$
          where $\hat\varphi$ is a Schwartz function supported by $[0 ,\delta/2]$.

          It is easy to see that $g$ satisfies the assumptions          of Theorem 4 with $a=1 ,S= [0,1-\delta/2]$.
           If $f|_\La = 0$, then the same is true for $g$.  So, Theorem 4 implies $f= 0.$\end{proof}

  \subsection{Weak Gaps}

        Here  we show that Theorem 5 is no longer true  for the weak spectral gaps.
       This is a direct corollary of a result from \cite{ou09}.

        We need the following

        \begin{definition} Given a closed (not necessarily bounded) set $S$,  the Bernstein space $B_S$ is the set of continuous bounded functions $f$ on $\R$ whose spectrum (in distributional sense) lies in $S$.
\end{definition}

     \begin{theorem}{\rm (\cite{ou09})}
There is a closed set  $S$ of Lebesque  measure zero  such that every bounded function $c(\lambda)$ defined on a u.d. set $\La$ can be interpolated by a function $f\in B_S$.
\end{theorem}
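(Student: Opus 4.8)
The plan is to build a single closed null set $S$ and to realize every interpolant as an \emph{absolutely convergent exponential sum} with frequencies drawn from $S$. Concretely, every $f$ I produce will have the form $f(x)=\sum_{\xi\in S}a_\xi e^{2\pi i\xi x}$ with $\sum_\xi|a_\xi|<\infty$; such an $f$ is automatically bounded, uniformly continuous, and its distributional spectrum lies in the closure of $\{\xi:a_\xi\ne0\}\subset S$, so $f\in B_S$. Thus it suffices to make $S$ closed and of measure zero and to solve, for each u.d.\ set $\La$ and each datum $c$ with $\sup_{\lambda\in\La}|c(\lambda)|\le1$, the infinite linear system $\sum_{\xi\in S}a_\xi e^{2\pi i\xi\lambda}=c(\lambda)$ over $\lambda\in\La$ with coefficients $(a_\xi)\in\ell^1(S)$. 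Everything then reduces to a quantitative finite-interpolation property of $S$, followed by a passage to the limit. The decisive feature, which distinguishes this statement from a one-$\La$ result, is that the \emph{same} $S$ must serve all u.d.\ sets simultaneously, so $S$ will be built in a scale-uniform way.

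The core is the following \emph{uniform finite interpolation} property I would establish for $S$: there is an absolute constant $C$ such that for every finite $\eta$-separated $F\subset\R$ and every $c\colon F\to\C$ with $|c|\le1$ there are coefficients on $S$ with $\sum_\xi|a_\xi|\le C$ interpolating $c$ on $F$, \emph{together with a tightness estimate} bounding the tail mass $\sum_{\xi\notin S^{(K)}}|a_\xi|$ uniformly in $F$, where $S^{(K)}$ are prescribed finite pieces of $S$. To realize this I would take $S=\bigcup_k S_k$ a multiscale union, the finite block $S_k$ being tuned to a separation scale $\eta_k$; using $S_k$ one manufactures, for each node of $F$, a bounded peaking exponential sum equal to $1$ at that node and small, in an $\ell^1$-summable-envelope sense, at the remaining nodes, and superposes these peaks with weights $c$. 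The scales are arranged so that $\bigcup_k S_k$ accumulates only on a preselected null set (a single point, or a Cantor set), forcing $\overline S$ to be closed with $|\overline S|=0$. Since u.d.\ sets of arbitrarily small separation (hence arbitrarily large density) must be handled, $S$ is necessarily \emph{unbounded}, and distinct blocks $S_k$ cover distinct density regimes.

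Given the limit step, I exhaust $\La$ by finite sets $F_1\subset F_2\subset\cdots$ with $\bigcup_N F_N=\La$, apply the finite lemma to each $F_N$ to obtain $a^{(N)}\in\ell^1(S)$ with $\sum_\xi|a^{(N)}_\xi|\le C$ and a uniform tail bound. By Banach--Alaoglu the ball of $\ell^1(S)=(c_0(S))^*$ is weak-$*$ compact, so along a subsequence $a^{(N)}_\xi\to a_\xi$ coordinatewise. For fixed $\lambda$ I split $\sum_\xi a^{(N)}_\xi e^{2\pi i\xi\lambda}$ into the finite part over $S^{(K)}$ and a tail: the finite part converges as $N\to\infty$ for each fixed $K$, while the tails are controlled uniformly by tightness and made small by taking $K$ large, so the interpolation identities pass to the limit and $\sum_\xi a_\xi e^{2\pi i\xi\lambda}=c(\lambda)$ for all $\lambda\in\La$, with $\sum_\xi|a_\xi|\le C$. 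The resulting $f$ lies in $B_S$ and interpolates $c$. I emphasize that because $S$ is unbounded the functions $f_N$ are \emph{not} equicontinuous, so one cannot simply extract a locally uniform limit via Bernstein's inequality; controlling the coefficients in $\ell^1(S)$ rather than the functions themselves is the honest route.

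The main obstacle is precisely the finite interpolation lemma with a \emph{single, fixed, null} frequency set valid across all separations at once. The tension is structural: richness of $B_S$ sufficient to interpolate dense u.d.\ data forces $S$ unbounded, while $|\overline S|=0$ forbids $S$ from containing any interval, so band-limited sinc-type interpolants are unavailable --- and in any case they fail the $\ell^1$-envelope and tightness requirements. The real work is to design the blocks $S_k$ and their peaking sums so that the interpolation norm $C$ is independent of the number of nodes and of the scale $\eta_k$, while packing all frequencies into a set whose closure is null. I expect this scale-uniform quantitative construction of $S$, with the accompanying uniformly bounded peaking functions, to be the crux; once it is in place, the weak-$*$ limit in $\ell^1(S)$ completes the proof routinely.
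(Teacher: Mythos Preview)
Your plan has a genuine obstruction at its core: the ``uniform finite interpolation'' lemma with an absolute $\ell^1(S)$-bound $\sum_\xi|a_\xi|\le C$ is impossible, no matter how $S$ is chosen. By duality, asking that every $c$ with $\|c\|_\infty\le1$ on a finite set $F$ be interpolated by an exponential sum with $\|a\|_{\ell^1(S)}\le C$ is equivalent to the inequality
\[
\sum_{\lambda\in F}|d_\lambda|\ \le\ C\,\sup_{\xi\in S}\Bigl|\sum_{\lambda\in F}d_\lambda e^{2\pi i\xi\lambda}\Bigr|\qquad\text{for all }(d_\lambda).
\]
Take $F=\{0,1,\dots,N-1\}$ and let $(d_\lambda)$ be Rudin--Shapiro $\pm1$ coefficients: then the left side equals $N$ while the right side is at most $C'\sqrt N$ for every $\xi\in\R$, hence certainly for $\xi\in S$. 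Thus $C\gtrsim\sqrt N$, and no uniform constant exists. The same blow-up kills the weak-$*$ limit step, since the $\ell^1$-ball you want to sit in does not stay bounded as $|F_N|\to\infty$. In short, absolutely convergent exponential sums with frequencies in \emph{any} fixed set are too rigid a class to interpolate arbitrary bounded data on arbitrarily large u.d.\ sets.

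The paper's proof avoids this by abandoning discrete frequencies altogether. The building blocks are not finite exponential sums but Fourier--Stieltjes transforms of \emph{singular continuous} measures coming from Menshov's theorem: there is a probability measure $\mu$ supported on a compact null set with $\hat\mu(x)\to0$. Rescaling and translating gives, for each scale, a function $g\in B_K$ ($K$ a compact null set) with $g(0)=\|g\|_\infty=1$ and $|g|$ as small as desired outside a tiny neighbourhood of $0$. Taking $S$ to be a union of such null compacta drifting to infinity, one obtains for each $\lambda\in\La$ a translate $f_\lambda\in B_S$ that peaks at $\lambda$ and decays exponentially on the rest of $\La$. The interpolant is then $f=\sum_\lambda b_\lambda f_\lambda$ with $b\in\ell^\infty(\La)$ found by a Neumann-series (diagonal-dominance) argument; boundedness of $f$ comes from the \emph{pointwise decay} of the $f_\lambda$, not from any $\ell^1$-control on Fourier coefficients. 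The essential idea you are missing is precisely Menshov's measure: a null spectrum that is uncountable and carries a continuous measure whose transform tends to zero.
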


It is obvious that every set of measure zero has weak periodic gaps with an arbitrary period $a$. However, no u.d. set $\La$ is a uniqueness set for $B_S$.

       A few words about the proof of Theorem 6. It is based on
         a classical result of D.E. Menshov (1916)  (see \cite{b}):
         {\it There is a probability measure $\mu$ on $\R$ supported by a compact
            set $K$ of measure zero, and such that its Fourier transform
$$
\hat\mu (x)=\int_K e^{-2\pi i t}\,d\mu(t)
$$
           vanishes at infinity}.

 Here is a short sketch of the proof (see details in \cite{ou1}, Lec. 10).

\begin{proof}
1. Given $\delta$, by re-scaling one can get a probability measure $\mu_\delta$ supported by a compact $K$ of Lebesgue measure zero, such that $$
\hat\mu_\delta(0)=1, |\hat\mu_\delta(x)|<\delta,\quad |x|>\delta.
$$

2. Using this, one can construct a family of compact sets $K_j$ of measure zero, which goes to infinity, and functions $g_j\in B_{K_j}, j\in\N,$ satisfying
$$\|g_j\|_\infty=g_j(0)=1, \ |g_j(t)|<e^{-j}, \quad  |t|>e^{-j}.$$

3. Set
$$
S:=\cup_{j=1}^\infty K_j.$$It is a closed (non-compact) set of measure zero.

Fix any $\delta>0$ and any u.d. set $\La$. Using appropriate translates of the functions $g_j$, one can define functions $f_\lambda\in B_S$ satisfying
$$
\|f_\lambda\|_\infty=f_j(\lambda)=1, |f(\lambda')|<e^{-2|\lambda'-\lambda|/\delta},\quad \lambda_j\in\La,\lambda_l\ne\lambda_j.
$$

4.
   Consider the linear operator $T:l^\infty(\La) \rightarrow l^\infty(\La) $ defined by $$(Tc)_\lambda:=\sum_{\lambda'\in\La,\lambda'\ne \lambda}f_{\lambda'}(\lambda)c_{\lambda'}, \quad \lambda\in\La, \quad c=\{c_{\lambda'}, \lambda'\in\La\}\in l^\infty(\La).$$
Clearly, $\|T\|<1$.  Hence, the operator $ T+I$ is surjective. Therefore, for every datum $c=\{c_\lambda\}\in l^\infty(\La)$ there is a sequence $ b=\{b_\lambda\}\in l^\infty(\La)$ satisfying
$(I+T)b=c$.
Hence, the function
$$
               f(x):= \sum_{\lambda\in\La} b_\lambda f_\lambda (x).
$$
belongs to $B_S$ and solves the interpolation problem $  f|_\La= c$.
\end{proof}

\section{Non-Periodic Spectral Gaps}

    Here we show that the periodicity of spectral gaps  is crucial for existence of  discrete uniqueness sets.

Let us consider spectra $S$ which are unions of disjoint intervals of a given length. For simplicity, we assume that each interval has length one:
\begin{equation}\label{gam}
S=\cup_{j=1}^\infty [\gamma_j,\gamma_j+1].
\end{equation}We also assume that the distances $\xi_j$ between the  intervals belong to a fixed interval, say $[2,3]$:
 \begin{equation}\label{xi}
  \quad \xi_j:= \gamma_{j+1}-\gamma_j-1\in [2,3], \quad j\in\N.
 \end{equation}
 Clearly, $S$ belongs to a half-line $[\gamma_1,\infty)$ and admits a representation
\begin{equation}\label{g}
S=\Gamma+[0,1], \  \Gamma:=\cup_{j=1}^\infty\{\gamma_j\},
\end{equation}where $\Gamma$ satisfies $\delta(\Gamma)\geq 3$. Here $\delta(\Gamma)$ is the separation constant defined in (1).

Now we introduce a certain property of u.d. sets.  We say that u.d. set $\Gamma$ satisfies property  (C)
    if it contains arbitrary long arithmetic progressions with rationally      independent steps.
       More precisely, we assume

       \medskip

        (C) For every $m\in\N$ there are rationally independent numbers $q_1,...,q_m$, such that for every $N\in\N$ the set   $\Gamma$ contains arithmetic progressions of length $N$ with differences $q_1,...,q_m$.  The latter means that there exist $a_1,...,a_m$ such that
$$
\bigcup_{j=1}^m\{a_j+q_j, a_j+2q_j,\dots,a_j+Nq_j\}\subset\Gamma.
$$

\begin{theorem}\label{int} Assume $S$ is given in  (\ref{gam})--(\ref{g}), where $\Gamma$ satisfies  property {\rm (C)}. 
Then     no u.d. set $\La$ is a uniqueness set for the Sobolev space $W_S^{\alpha}$.\end{theorem}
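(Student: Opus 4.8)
The statement is equivalent to the following: for every u.d.\ set $\La$ there is a non-zero $f\in W_S^{(\alpha)}$ with $f|_\La=0$, and producing such an $f$ is the whole task. Throughout I would use the representation $S=\Gamma+[0,1]$ with $\delta(\Gamma)\ge 3$ from (\ref{g}), fix a smooth $\rho\ge 0$ supported in $[0,1]$ with $\int\rho=1$, and put $h:=\hat\rho$, a Schwartz function with $h(0)=1$. The key elementary fact is that if $a+q\{1,\dots,N\}\subset\Gamma$ is a finite arithmetic progression, then every function
$$
x\longmapsto h(x)\,e^{2\pi i ax}\sum_{k=1}^{N}c_k\,e^{2\pi i kqx}
$$
has Fourier transform supported on the pairwise disjoint unit intervals $[a+kq,\ a+kq+1]\subset S$ (disjoint because $\delta(\Gamma)\ge 3>1$), hence lies in $W_S^{(\alpha)}$; its $W^{(\alpha)}$--norm is comparable to $\|(c_k)\|_{\ell^2}$ up to a factor depending only on where the progression sits. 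Using several progressions $a_j+q_j\{1,\dots,N\}\subset\Gamma$ at once gives all functions $f(x)=h(x)\sum_j e^{2\pi i a_jx}P_j(e^{2\pi i q_jx})$ with $\deg P_j\le N$ and $P_j(0)=0$, again with spectrum in $S$. Since $\Gamma\to\infty$, passing to the tail of a longer progression also lets one place these progressions arbitrarily far to the right while keeping them as long as desired.

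The plan is a finite-section argument. Fix an enumeration $\La=\{\lambda_n\}$. By property (C), fix rationally independent steps $q_1,\dots,q_m$ (with $m$ large, or letting $m\to\infty$) for which $\Gamma$ contains progressions of every length. For each $N$, solving a homogeneous linear system of $mN-1$ equations in the $mN$ coefficients $c_{j,k}$ produces a non-trivial $f_N$ of the above form vanishing at $\lambda_1,\dots,\lambda_{mN-1}$; thus $f_N\in W_S^{(\alpha)}\setminus\{0\}$ kills the $N$-th initial segment of $\La$. After a suitable normalisation I would extract a weakly convergent subsequence $f_N\rightharpoonup f$ in $W^{(\alpha)}$; since $\alpha>1/2$ makes $W^{(\alpha)}$ a reproducing-kernel space, weak convergence forces $f_N(x)\to f(x)$ for every $x$, so $f|_\La=0$. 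The whole theorem then reduces to ensuring $f\ne 0$.

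This last point is the crux, and it is precisely where rational independence of the $q_j$ is essential and the argument diverges from the periodic-gap case of Theorem~\ref{t1}. Rational independence gives that $\lambda\mapsto(q_1\lambda,\dots,q_m\lambda)\bmod\mathbb Z^m$ is injective on $\R$, so the ($\delta$--separated) points of $\La$ map to distinct points of $\mathbb T^m$ lying on a densely winding line; I would quantify this via Weyl/Kronecker equidistribution together with the separation of $\La$ to solve the vanishing conditions so that the coefficient vector is bounded below at a fixed auxiliary point $\xi_0\notin\La$ (chosen with $h(\xi_0)\ne 0$ and $q_j(\xi_0-\lambda_n)\notin\mathbb Z$ for all $j,n$) while the $\ell^2$--mass carried by far-out frequencies decays fast enough to keep $\|f_N\|_{W^{(\alpha)}}$ bounded above. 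Granting this, the limit $f$ is a genuine non-zero element of $W_S^{(\alpha)}$ vanishing on all of $\La$, and $\La$ fails to be a uniqueness set.

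I expect the main obstacle to be exactly this quantitative step: converting the qualitative injectivity supplied by rational independence into lower bounds for the relevant Vandermonde-type systems that are uniform in $N$, so that $|f_N(\xi_0)|$ stays bounded away from $0$ and $\|f_N\|_{W^{(\alpha)}}$ stays bounded as $N\to\infty$. Heuristically this is where the dichotomy of the paper lives: for periodic gaps the attached exponential systems are rigid and one gets uniqueness, whereas rationally independent steps provide the "incoherence" needed to manufacture the annihilating function.
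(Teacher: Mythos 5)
Your proposal correctly identifies the raw material (progressions $a_j+q_j\{1,\dots,N\}\subset\Gamma$ give functions $h(x)\sum_j e^{2\pi i a_jx}P_j(e^{2\pi i q_jx})$ with spectrum in $S$, and rational independence of the $q_j$ must be exploited quantitatively), but it has a genuine gap, and you name it yourself: the finite-section/weak-limit scheme only works if, uniformly in $N$, you can solve the vanishing conditions with $\|f_N\|_{W^{(\alpha)}}$ bounded above and $|f_N(\xi_0)|$ bounded below. Without such a two-sided bound the normalized weak limit can perfectly well be $0$ (weak limits of functions annihilating longer and longer initial segments of $\La$ tend to vanish unless some frame/Riesz-type inequality prevents it), and the appeal to ``Weyl/Kronecker equidistribution'' is only a gesture toward the needed estimate, not a proof of it. That uniform estimate is not a technical afterthought; it is the entire content of the theorem, and nothing in your sketch supplies it.

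The paper closes exactly this gap by a different and more structured route. First (Lemma 5) it uses property (C) to build an exponential polynomial $P(t)=\frac1N\sum_{j}e^{i\eta_j t}$ with frequencies $\eta_j\in\Gamma$ and $|P(t)|\le\epsilon$ for $\epsilon<|t|<1/\epsilon$: one takes $m$ Dirichlet-type kernels $P_{n,q_j}$ with rationally independent steps $q_j$, observes that the nonzero points $kq_j$ in $(-1/\epsilon,1/\epsilon)$ are $\rho$-separated (this is where rational independence enters, quantitatively), so at any such $t$ at most one kernel fails to be small, and averages $\frac1m\sum_j e^{2\pi i a_jt}P_{n,q_j}(t)$. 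Then (Main Lemma) it sets $S(\delta):=\Gamma_P+[0,1]\subset S$, takes a bump $\Phi\ge 0$ supported in $[0,1]$, and forms $H=\sum_{\gamma\in\Gamma_P}\Phi(\cdot-\gamma)$, whose Fourier transform $h=P\varphi$ satisfies $h(0)=1$ and $\sum_{\mu\in\La,\mu\ne\lambda}|h(\mu-\lambda)|<\tfrac12$ for any $\La$ with $\delta(\La)\ge\delta$; the resulting diagonal dominance gives $\int_{S(\delta)}|\sum c_\lambda e^{i\lambda t}|^2dt\ge \frac{1}{2M}\sum|c_\lambda|^2$, i.e.\ by Lemma 4 every such $\La$ is an interpolation set for $PW_{S(\delta)}$. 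Non-uniqueness is then immediate: apply this to $\La\cup\{c\}$, interpolate the data $1$ at $c$ and $0$ on $\La$, and note $PW_{S(\delta)}\subset W_S^{(\alpha)}$ since $S(\delta)$ is bounded. This is precisely the uniform-in-$N$ ``Vandermonde lower bound'' you flagged as the obstacle, obtained not by solving linear systems but by exhibiting a nonnegative function supported in a bounded piece of $S$ whose Fourier transform is concentrated near the origin. To repair your argument you would need to prove an inequality of this strength anyway, at which point the weak-limit machinery becomes unnecessary.
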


   Below we prove this result in a stronger form.

One may also check that, under the assumptions of Theorem \ref{int}, no u.d. set $\La$ is a uniqueness set for the space $Y_S$.

\subsection{Interpolation Sets}

A set $\La$ is called an {\it interpolation set} for the                    Paley-Wiener space $PW_S$, if for every sequence $\{c_\lambda,\lambda\in\La\}\in l^2(\La)$ there exists $f\in PW_S$ satisfying$$
f(\lambda)=c_\lambda,\quad \lambda\in\La.
$$

The following criteria is well-known (see e.g. \cite{ou1}, Lec. 4):

\begin{lemma}Let $S$ be a bounded set and $\La$ a u.d. set. Then $\La$ is a set of interpolation for $PW_S$ if and only if there is a constant $C>0$ such that the inequality
$$
\int_S \left|\sum_{\lambda\in\La}c_\lambda e^{i2\pi\lambda t}\right|^2\,dt\geq C \sum_{\lambda\in\La}|c_\lambda|^2
$$holds  for every finite sequence $c_\lambda$.
\end{lemma}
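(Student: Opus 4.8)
The plan is to read this off from the standard Hilbert-space duality between interpolation and lower bounds for the synthesis operator of an exponential system. First I would move the problem into $L^2(S)$: every $f\in PW_S$ has the form $f(x)=\int_S F(t)e^{2\pi i xt}\,dt$ for a unique $F\in L^2(S)$, and with $e_\lambda(t):=e^{-2\pi i\lambda t}$ one has $f(\lambda)=\langle F,e_\lambda\rangle_{L^2(S)}$. Hence $\La$ is an interpolation set for $PW_S$ exactly when the analysis operator $A\colon L^2(S)\to \ell^2(\La)$, $AF:=(\langle F,e_\lambda\rangle)_{\lambda\in\La}$, maps onto $\ell^2(\La)$.

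Next I would record that $A$ is bounded, equivalently that $\{e_\lambda\}_{\lambda\in\La}$ is a Bessel system in $L^2(S)$. This is the only place the hypotheses enter: when $S$ is bounded and $\delta(\La)>0$, the classical Plancherel--P\'olya inequality $\sum_{\lambda}|\langle F,e_\lambda\rangle|^2\le B\|F\|_{L^2(S)}^2$ holds, which one proves by cutting $S$ into unit intervals and using that a u.d.\ set meets each of them in a uniformly bounded number of points. Thus the synthesis operator $U:=A^*\colon\ell^2(\La)\to L^2(S)$, $Uc=\sum_\lambda c_\lambda e_\lambda$, is bounded, and directly $\|Uc\|_{L^2(S)}^2=\int_S\bigl|\sum_\lambda c_\lambda e^{-2\pi i\lambda t}\bigr|^2\,dt$.

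The heart of the argument is the duality: a bounded operator $A$ between Hilbert spaces is surjective if and only if its adjoint $A^*=U$ is bounded below, i.e.\ $\|Uc\|\ge C'\|c\|$. For one direction, surjectivity of $A$ together with the open mapping theorem forces $A^*$ to be injective with closed range, hence bounded below; for the converse, $U$ bounded below has closed range, so $A$ has closed range, while $U$ injective makes $A$ have dense range, so $A$ is onto. Combining this with the formula for $\|Uc\|^2$, replacing $c_\lambda$ by $\overline{c_\lambda}$ (which turns $e^{-2\pi i\lambda t}$ into $e^{2\pi i\lambda t}$ and changes neither side), and noting that, $U$ being bounded, the inequality for finite sequences is equivalent to the inequality for all of $\ell^2(\La)$, yields precisely the stated estimate.

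The main obstacle is really just the Bessel/boundedness step for $A$, since the rest is routine functional analysis; but for u.d.\ $\La$ and bounded $S$ this is classical and can be quoted. The only care needed beyond that is bookkeeping with the Fourier conventions, so that $f|_\La=0$ corresponds to $F\perp e_\lambda$ for every $\lambda$, and the trivial density passage between finite sequences and $\ell^2(\La)$.
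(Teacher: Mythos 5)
Your proposal is correct. Note that the paper does not prove this lemma at all: it is stated as a well-known criterion with a reference (\cite{ou1}, Lecture 4), so there is no internal proof to compare against; your argument is the standard one that such a reference would give. The reduction to the statement ``the analysis operator $A F=(\langle F,e_\lambda\rangle_{L^2(S)})_{\lambda\in\La}$ is onto $\ell^2(\La)$ if and only if its adjoint, the synthesis operator $U c=\sum_\lambda c_\lambda e_\lambda$, is bounded below'' is exactly right, and you correctly identify the one non-formal ingredient, namely that $A$ is bounded (the Bessel property of $\{e_\lambda\}$ in $L^2(S)$), which for bounded $S$ and uniformly discrete $\La$ is the classical Plancherel--P\'olya inequality and may legitimately be quoted. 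The remaining steps---open mapping theorem for one direction, closed range plus dense range for the converse, the conjugation trick to match the sign convention in the exponent, and the passage from finite sequences to all of $\ell^2(\La)$ by boundedness of $U$---are all handled correctly.
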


 Theorem 7 is a direct corollary of the following

\medskip\noindent{\bf Main Lemma}. {\it  Assume $S$ is a set from Theorem 7. Then for every $\delta>0$     there is a bounded subset $S(\delta)\subset S$ such that  every u.d. set $\La$ satisfying $\delta(\La)\geq\delta$  is a set of interpolation  for $PW_{S(\delta)}$.}

\medskip       Indeed, consider a set $\La\cup\{c\}$ for some point $c\not\in\La$. By the Main Lemma, it is a set of interpolation for  $PW_{S(\delta)}$, for some bounded subset $S(\delta)\subset S$. Then  there exists $f\in PW_{S(\delta)}$ satisfying $f(c)=1$ and $$ f(\lambda)=0,\quad \lambda\in\La.$$ It remains to observe that $PW_{S(\delta)}\subset W_S^{\alpha}$.

\subsection{Proof of Main Lemma}

\begin{lemma} Suppose $\Gamma$ satisfies property {\rm (C)}. Then for every $0<\epsilon<1$   there exist $N\in\N$ and $\eta_{j}\in\Gamma, j=1,...,N,$ such that the exponential polynomial
$$
P(t)=\frac{1}{N}\sum_{j=1}^N e^{i \eta_{j}t}
$$
satisfies
\begin{equation}\label{p}
|P(t)|\leq\epsilon , \quad \epsilon < |t|<1/\epsilon.
\end{equation}\end{lemma}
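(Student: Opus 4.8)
The plan is to use property (C) to produce, for a suitable $m$, rationally independent steps $q_1,\dots,q_m\in\R$ and long arithmetic progressions $\{a_k+q_k,\dots,a_k+Nq_k\}\subset\Gamma$, and then to form $P$ as an average of the exponentials $e^{i\eta_j t}$ with $\eta_j$ ranging over the union of these progressions. The point is that for a single arithmetic progression with step $q$ and length $N$, the normalized sum $\frac1N\sum_{\ell=1}^N e^{i(a+\ell q)t}$ is, up to the unimodular factor $e^{iat}$, a Dirichlet-type kernel $\frac1N\sum_{\ell=1}^N e^{i\ell q t}$, whose modulus equals $\frac{1}{N}\left|\frac{\sin(Nqt/2)}{\sin(qt/2)}\right|$. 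This is small (of size $\le 2/(N\,\|qt/2\pi\|)$, where $\|\cdot\|$ is distance to the nearest integer) except when $qt$ is close to a multiple of $2\pi$, i.e. on a periodic set of ``resonant'' $t$'s near $\frac{2\pi}{q}\Z$.

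The key mechanism is that the resonant sets for rationally independent steps $q_1,\dots,q_m$ are nearly disjoint away from $t=0$: on the fixed annulus $\epsilon<|t|<1/\epsilon$, a Kronecker/equidistribution argument shows that one cannot have $q_k t$ simultaneously close to $2\pi\Z$ for two different indices $k$ unless $t$ is extremely small, and in any case for each fixed $t$ in the annulus at most one of the $m$ progressions is ``in resonance'' once $m$ is chosen large enough and $N$ large enough relative to $\epsilon$. I would therefore first fix $m=m(\epsilon)$ large, obtain $q_1,\dots,q_m$ from (C), and then choose $N=N(\epsilon,q_1,\dots,q_m)$ large; writing $P(t)=\frac{1}{mN}\sum_{k=1}^m e^{ia_k t}\sum_{\ell=1}^N e^{i\ell q_k t}=\frac1m\sum_{k=1}^m e^{ia_k t}D_N(q_k t)$ with $|D_N(q_k t)|\le 1$ always, the bound $|P(t)|\le\epsilon$ on the annulus follows because for each $t$ at most one summand can be of size close to $1$ while the remaining $m-1$ summands are each $\le 1/m$ in contribution, and the single possibly-large summand is itself $\le$ something small once we rule out true resonance; choosing $m>2/\epsilon$ and $N$ large enough to make $|D_N(q_k t)|\le\epsilon/2$ off a tiny neighborhood of $\frac{2\pi}{q_k}\Z$, and using that these neighborhoods (for the finitely many $q_k$) miss the annulus or are handled by the averaging, gives the estimate.

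The main obstacle is the quantitative control of resonances: I must make precise that on the compact annulus $\epsilon\le|t|\le1/\epsilon$ the ``bad'' sets $B_k:=\{t:\operatorname{dist}(q_k t,2\pi\Z)<\rho\}$ can be arranged (by rational independence, via a Kronecker-type argument, after possibly rescaling the $q_k$ — note (C) lets us pick the steps, and rational independence is stable under the needed adjustments) so that no point of the annulus lies in more than one $B_k$, and that off $\bigcup_k B_k$ each $|D_N(q_k t)|$ is uniformly small. A clean way is: choose $m$ with $1/m<\epsilon/2$; for that $m$ get $q_1,\dots,q_m$ and progressions of every length; the periods $2\pi/q_k$ of the resonant sets are incommensurable, so the sets $B_k$ (with $\rho$ fixed small) have, on the fixed bounded annulus, pairwise intersections that shrink and can be made empty for suitable $\rho$; then pick $N$ so large that $|D_N(s)|<\epsilon/2$ whenever $\operatorname{dist}(s,2\pi\Z)\ge\rho$. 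On $B_k$ we bound the $k$-th term trivially by $1/m<\epsilon/2$ and all others by $<\epsilon/2$ total, while off every $B_k$ all $m$ terms are $<\epsilon/2$. In either case $|P(t)|<\epsilon$, which is the claim, and all chosen $\eta_j=a_k+\ell q_k$ lie in $\Gamma$ as required.
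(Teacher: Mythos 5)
Your proposal is correct and follows essentially the same route as the paper's proof: average modulated Dirichlet kernels over $m$ arithmetic progressions supplied by property (C), use rational independence of the steps to ensure that at most one kernel can be in resonance at any $t$ in the annulus $\epsilon<|t|<1/\epsilon$, bound that one term trivially by $1/m$ and the rest via the Dirichlet-kernel decay for large $N$. Your bookkeeping (taking $m>2/\epsilon$ and $\epsilon/2$ bounds) is in fact slightly cleaner than the paper's final estimate, so no changes are needed.
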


\begin{proof}
1. Fix any integer $m>1/\epsilon$. Then fix numbers $q_1,...,q_m$  in the definition of property (C). Since $q_j$ are rationally independent,  the set of points \begin{equation}\label{set}\{kq_j\in(-1/\epsilon,1/\epsilon)\}\end{equation} is separated, where $ k\in\Z,k\ne0, j=1,\dots, m$. Hence,  the distance between any two points in this set exceeds some positive number $\rho$. We may assume that $\rho<\epsilon$.

2. For $n\in\N$ and $q\geq 2$, consider the $(1/q)$-periodic exponential polynomial
 $$P_{n,q}(t):=\frac{1}{n}\sum_{j=0}^{n-1}e^{i 2\pi j q t}=\frac{1}{n}\frac{e^ {i2\pi nq t}-1}{e^{i2\pi q t}-1}.$$
From the properties of Dirichlet kernel, it is well-known that it satisfies
\begin{equation}\label{rho}
|P_{n,q}(t)|\leq\rho, \quad \mbox{dist}(t, (1/q)\Z)\geq\rho,
\end{equation}provided $n$ is large enough.

3.
Choose $n$ so large that (\ref{rho}) holds with $q=q_j$,  $j=1,\dots,m$. Then, since the set (\ref{set}) is $\rho$-separated, for every $t$ satisfying $\epsilon<|t|<1/\epsilon,$ the inequality
$$
|P_{n, q_j}(t)|<\epsilon
$$holds for all but at most one value of $j\in\{1,\dots,m\}$.

4. By the definition of property (C), there exist $a_j$  such that $a_j+kq_j\in\Gamma, k=0,\dots, n-1$. Set
$$P(t)=\frac{1}{m}\sum_{j=1}^m e^{i2\pi a_j t}P_{n, q_j}(t).$$
By Step 3, we see that
$$
|P(t)|<\frac{1+(m-1)\epsilon}{m}<\epsilon, \quad \epsilon<|t|<1/\epsilon,
$$which completes the proof.\end{proof}

\begin{proof}[Proof of Main Lemma] Fix $\delta>0$ and assume that a u.d. set $\La$ satisfies $\delta(\La)\geq\delta$.

By Lemma 5, for every   $0<\epsilon<1$ there is an exponential polynomial $P$ with frequencies in $\Gamma$ satisfying (\ref{p}). We denote the set of its frequencies   by $\Gamma_P\subset\Gamma$, and set $$S(\delta):=\Gamma_P+[0,1].$$Clearly, $S(\delta)$ is a bounded subset of $S$.

Now we fix any positive smooth function $\Phi$ which vanishes outside $[0,1]$ such that its Fourier transform $\varphi=\hat \Phi$ satisfies $\varphi(0)=1$ and
\begin{equation}\label{ph}
\sup_{x\in\R}(1+x^4)|\varphi (x)|<\infty.
\end{equation}

Set $$
 H(t):=(\Phi\ast \sum_{\gamma\in \Gamma_P}\delta_\gamma)(t)= \sum_{\gamma\in \Gamma_P} \Phi(t-\gamma).
$$Then the support of $H$ belongs to $S(\delta)$ and its Fourier transform is given by
$$
h(x):=\hat H(x)=P(x)\varphi(x).
$$Clealry, $h(0)=1.$

When $\epsilon$  is sufficiently small, from (\ref{p}) and (\ref{ph}) we get
$$
|h(x)|<\frac{\epsilon}{1+x^2}, \mbox{ for all } |x|>\delta,
$$where $\delta$ is the separation constant of $\La$.
Using this estimate and assuming that $\epsilon $ is sufficiently small,   for every $\lambda\in\La$ we get the estimate
$$
\sum_{\mu\in\La,\mu\ne\lambda}|h(\mu-\lambda)|<\sum_{\mu\in\La,\mu\ne\lambda}\frac{\epsilon}{1+(\mu-\lambda)^2}
< 2\sum_{n\in\N}\frac{\epsilon}{1+(\delta n)^2}<\frac{1}{2}.
$$

Set $$M:=\max_{t\in [0,1]}|\Phi(t)|.$$ Then
$$
\int_{S(\delta)}\left|\sum_{\lambda\in\La}c_\lambda e^{i\lambda t}\right|^2\,dt \geq \frac{1}{M}\int_{S(\delta)}\left|\sum_{\lambda\in\La}c_\lambda e^{i\lambda t}\right|^2 H(t)\,dt
$$
$$
=\frac{1}{M}\left(\sum_{\lambda\in\La}|c_\lambda|^2+\sum_{\lambda,\mu\in\La,\lambda\ne\mu}c_\lambda\bar c_\mu h(\lambda-\mu)\right)\geq $$$$\frac{1}{M}\left(\ \sum_{\lambda\in\La}|c_\lambda|^2-\sum_{\lambda,\mu\in\La,\lambda\ne\mu}\frac{|c_\lambda|^2+|c_\mu|^2}{2}|h(\lambda-\mu)|\right)\geq
$$
$$
=\frac{1}{M}\left(\sum_{\lambda\in\La}|c_\lambda|^2-\sum_{\lambda\in\La}|c_\lambda|^2\sum_{\lambda,\mu\in\La,\mu\ne\lambda}|h(\lambda-\mu)|\right)>
\frac{1}{2M}\sum_{\lambda\in\La}|c_\lambda|^2.
$$By Lemma 4, this completes the proof.
\end{proof}



\subsection{Random Spectra do not Admit u.d. Uniqueness Sets}
Here we consider the situation when $S$ is a countable union of unite intervals,  the distances  between the intervals being randomly distributed. More precisely, below we assume that  $S$ and $\Gamma$ are defined in (\ref{gam}) and  (\ref{g}), and that the $\xi_j:=\gamma_{j+1}-\gamma_j-1$  are independent random variables uniformly distributed over the interval $[2,3]$.
With these assumptions, we have

\begin{theorem}\label{tr}
With probably one no u.d. set  $\La$ is  a uniqueness set   for $W_{S}^{(\alpha)}$.
\end{theorem}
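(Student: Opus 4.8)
The plan is to reduce Theorem~\ref{tr} to an almost sure version of Lemma~5 and then quote the Main Lemma. Note first that for the random spectrum one always has $\delta(\Gamma)\ge3$, since $\gamma_{j+1}-\gamma_j=1+\xi_j\ge3$, so $S=\Gamma+[0,1]$ is of the form treated in the Main Lemma, except that property~(C) fails --- almost surely $\Gamma$, being continuously distributed, contains no exact arithmetic progression of length~$\ge3$. But property~(C) enters the proof of the Main Lemma only through the conclusion of Lemma~5: the existence, for each $\epsilon\in(0,1)$, of an exponential polynomial $P$ with frequencies in $\Gamma$ satisfying~(\ref{p}). Hence it suffices to prove that, on an event $\Omega$ of probability one, such a $P$ exists for every $\epsilon\in(0,1)$. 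Granted this, the Main Lemma holds for the random $S$ on $\Omega$, and the deduction recorded immediately after its statement applies on $\Omega$ to every u.d.\ set $\La$ at once (since $PW_{S(\delta)}\subset W_S^{(\alpha)}$), which is Theorem~\ref{tr}.

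To build these polynomials I would run the proof of Lemma~5 with \emph{approximate} arithmetic progressions. Fix, deterministically and once and for all, for each $m\in\N$ a rationally independent $m$-tuple $q_1^{(m)},\dots,q_m^{(m)}$ in the open interval $(3,4)$ (possible, since the rationally dependent tuples form a null set). For a fixed $q\in(3,4)$ and integers $n\ge2$, $r\ge1$ with $(q-1-1/r,\,q-1+1/r)\subset(2,3)$, the event that $n-1$ consecutive values $\xi_i$ all lie in $(q-1-1/r,\,q-1+1/r)$ has probability $(2/r)^{n-1}>0$; over disjoint index blocks these events are independent, so the second Borel--Cantelli lemma gives that, almost surely, $\Gamma$ contains infinitely many strings $\gamma^{(0)},\dots,\gamma^{(n-1)}$ of consecutive elements with $|\gamma^{(k+1)}-\gamma^{(k)}-q|<1/r$ for all $k$. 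Intersecting over the countably many triples $(q_j^{(m)},n,r)$ yields a single probability-one event $\Omega$ on which, for all $m$, $j\le m$, $n$ and $r$, the set $\Gamma$ contains such a $(1/r)$-approximate $q_j^{(m)}$-progression of length $n$ --- in fact arbitrarily many, in disjoint blocks of $\Gamma$ as far out as we wish.

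Now work on $\Omega$ and fix $\epsilon\in(0,1)$. Choose $m>1/\epsilon$ and $q_1,\dots,q_m:=q_1^{(m)},\dots,q_m^{(m)}$; as in Step~1 of the proof of Lemma~5 this gives a separation constant $\rho=\rho(\epsilon)\in(0,\epsilon)$, and then choose $n$ large enough that Steps~2--3 of that proof apply. Pick $r$ so large that $2\pi n/(r\epsilon)<\rho$, and, using $\Omega$, pick for each $j$ a $(1/r)$-approximate $q_j$-progression $\gamma_j^{(0)},\dots,\gamma_j^{(n-1)}$ in $\Gamma$, the $m$ strings lying in pairwise disjoint index blocks of $\Gamma$ so that the $mn$ numbers $\gamma_j^{(k)}$ are distinct. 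Writing $d_j^{(k)}:=\gamma_j^{(k)}-\gamma_j^{(0)}$, so that $|d_j^{(k)}-kq_j|<n/r$, set
$$
P(t):=\frac{1}{mn}\sum_{j=1}^{m}\sum_{k=0}^{n-1}e^{i2\pi\gamma_j^{(k)}t}
     =\frac1m\sum_{j=1}^{m}e^{i2\pi\gamma_j^{(0)}t}\,\widetilde P_j(t),\qquad
\widetilde P_j(t):=\frac1n\sum_{k=0}^{n-1}e^{i2\pi d_j^{(k)}t}.
$$
Then $P$ has equal coefficients and all its frequencies lie in $\Gamma$, exactly as in Lemma~5. On the compact set $|t|\le1/\epsilon$ one has $|\widetilde P_j(t)-P_{n,q_j}(t)|\le 2\pi|t|\,(n/r)<\rho$, so $\widetilde P_j$ inherits from the Dirichlet-type kernel $P_{n,q_j}$ its smallness off an $O(\rho)$-neighbourhood of the relevant lattice; repeating the argument of Step~3 --- for $\epsilon<|t|<1/\epsilon$ at most one index $j$ is ``bad'' --- gives $|P(t)|\le 1/m+2\epsilon<3\epsilon$ for $\epsilon<|t|<1/\epsilon$, which is~(\ref{p}) after relabelling $\epsilon$ as $\epsilon/3$. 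This produces the required $P$ for every $\epsilon$, and together with the first paragraph proves Theorem~\ref{tr}.

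I expect the only delicate point --- though a modest one --- to be that rational independence is not stable under perturbation, which is exactly why the steps $q_j^{(m)}$ must be committed to before the random gaps are revealed; after that everything is a transcription of the proofs of Lemma~5 and the Main Lemma, the one thing to verify being that the accumulated error $n/r$, with $n=n(\epsilon)$ possibly large, is harmless --- which it is, since for each fixed $\epsilon$ the data $m,\rho,n,r$ are fixed and only the restriction of $P$ to $|t|\le1/\epsilon$ is used. A more hands-on alternative is to take $P(t)=\frac1N\sum_{\gamma\in\Gamma\cap I}e^{i\gamma t}$ for a long interval $I$, split $\Gamma\cap I$ into sub-blocks of length $\sim\epsilon^{-3}$ to peel off a deterministically small ``geometric-mean'' term from a normalized sum of bounded martingale differences, apply the Azuma inequality together with a net over $|t|\le1/\epsilon$ to make a single block good with probability $\ge1/2$, and then conclude by Borel--Cantelli over independent blocks; in that route the obstacle is sharper, since one needs an \emph{exponential} tail bound --- the Lipschitz constant of $P$ grows linearly in $N$, so a second-moment bound does not survive the union bound over the net.
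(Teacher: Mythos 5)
Your proof is correct, and its overall architecture (reduce to an almost-sure version of the Main Lemma, produce the polynomial $P$ with small tails via Borel--Cantelli, then interpolate at an added point $c$, noting $PW_{S(\delta)}\subset W_S^{(\alpha)}$ and that the a.s.\ event is chosen before, hence uniformly over, all u.d.\ sets $\La$) is exactly the paper's. Where you diverge is in the key technical step: you insist that the frequencies of $P$ lie in $\Gamma$ itself, which forces you to find \emph{approximate} arithmetic progressions inside $\Gamma$ (second Borel--Cantelli over disjoint gap-blocks) and then to redo Lemma 5 with a quantitative perturbation bound $|\widetilde P_j(t)-P_{n,q_j}(t)|\le 2\pi|t|\,n/r<\rho$ on $|t|\le 1/\epsilon$, taking care that $\rho$ and $n$ are fixed before $r$. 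The paper avoids this entirely by noticing that the frequencies need not be points of $\Gamma$ at all: only the inclusion $S(\delta)\subset S$ matters. It therefore takes \emph{exact} progressions $\gamma_{k_j},\gamma_{k_j}+q_j,\dots,\gamma_{k_j}+Nq_j$ anchored at points of $\Gamma$, uses Borel--Cantelli to ensure (a.s., for each fixed choice of $m$, $N$, $q_1,\dots,q_m$) that the true $\gamma$'s stay within $1/4$ of these progression points, and compensates by shrinking the unit intervals to $[1/4,3/4]$, so that $S(\delta)=\Gamma^\ast+[1/4,3/4]\subset S$ and Lemma 5 plus the Main Lemma apply verbatim, with no error analysis. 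So your route costs a re-proof of Lemma 5 with drift terms but buys a slightly stronger conclusion (a polynomial with spectrum literally in $\Gamma$, and $S(\delta)$ made of full unit intervals of $S$); the paper's trick is shorter and keeps Lemma 5 untouched. You are also more explicit than the paper about intersecting over the countably many parameter choices so that one null set serves all $\epsilon$ (equivalently all $\delta$), which is a genuine, if implicit, ingredient of the paper's argument. Your alternative sketch via Azuma and a net is unnecessary for the theorem and, as you note yourself, would require a more delicate tail bound than the progression argument.
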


\begin{proof}
Theorem 8 follows from the following claim, which is an analogue of the Main Lemma in Sec. 6.1: {\it With probability one, for every fixed $\delta>0$ there is a bounded subset $S(\delta)\subset S$
such that every u.d. set $\La, \delta(\La)\geq\delta,$ is a set of interpolation for $PW_{S(\delta)}$.}

Recall that $$ S=\cup_{j=1}^\infty \{\gamma_j\}+[0,1], \quad \gamma_{j+1}-\gamma_j\in[3,4], \quad j\in\N.$$ It is easy to see that  given any integers $k\geq 1, N\geq 2$ and number $q\in (3,4)$,
 the set
$$
\{\gamma_k, \gamma_k+q,\dots,\gamma_k+Nq\}+[1/4,3/4]
$$belongs to $S$ whenever
$$
|\gamma_{k+j}-(\gamma_k+jq)|<\frac{1}{4},\quad j=1,2,\dots, N.
$$
Recall also that $\gamma_{j+1}-\gamma_{j}$ is uniformly distributed over $[3,4]$. So, the probability that the latter inequalities hold true is positive and independent on $k$.

Now, fix any  $m\in\N$ and $q_1,\dots,q_m\in(3,4)$. By the  the  Borel-Cantelli lemma, one can see that  with probability one there are integers $k_1,\dots,k_m$  such that
the finite sequence
 $$
\Gamma^\ast:= \bigcup_{j=1}^{m} \{\gamma_{k_j}, \gamma_{k_j}+q_j,\dots, \gamma_{k_j}+Nq_j\}
$$satisfies
$$
S(\delta):=\Gamma^\ast+[1/4,3/4]\subset S.
$$
Now, choosing $m$ and $N$ sufficiently large, the claim above follows exactly the same way as in the proof of the Main Lemma.
\end{proof}

\section{Remarks}

\subsection{Multi-dimensional Extensions}

All our one-dimensional results above admit multi-dimensional extensions. Here we give a very brief account of these extensions.


The  definitions in Sec. 1   can be extended to the multi-dimensional situation. In particular,
given a set $S\subset\R^p$, the Paley-Wiener space $PW_S$ consists  of the ($p$-dimensional) inverse Fourier transforms of the $L^2(\R^p)$-functions which vanish a.e. outside $S$.
A  set $\La\subset\R^p$ is uniformly discrete (u.d.), if the infimal distance between its different elements is positive. A u.d. set $\La$ possesses a uniform density $D(\La)$ if
$$
\mbox{Card} (\La\cap ([0,r]^p+s))=r^p D(\La)+o(r^p)\mbox{ uniformly on } s \mbox{ as } r\to\infty.
$$Here  $s=(s_1,\dots,s_p)\in \R^p$ and $$[0,r]^p+s=\{x=(x_1,\dots,x_p)\in\R^p: s_j\leq x_j\leq s_j+r, j=1,\dots,p\}.$$

We will  denote by $|S|$ the $p$-dimensional measure of a set $S\subset\R^p$.



Denote by $S_a$, where $a$ is a positive number, the "projection" of a set $S\subset\R^p$ onto the cube $[0,a]^p$:
$$
S_a:=(S+a\Z^p)\cap [0,a]^p.
$$

We will now formulate  a multi-dimensional analogue of Theorem 1: {\it
Suppose  that $s_1,\dots,s_p$ are real numbers linearly independent over the set of integers. Then the set
$$\La:=\{ m_1+s_12^{-|m_1|-...-|m_p|},\dots, m_p+s_p2^{-|m_1|-...-|m_p|}, (m_1,\dots,m_p)\in\Z^p\}$$
is a uniqueness set for $PW_S$, for every bounded set $S\subset\R^n$ satisfying $|S_1|<1$.}

\medskip
The proof of this result goes on the same lines as the proof of Theorem 2 in \cite{u}.

Choosing the numbers $s_j$ small, one can make the set $\La$ in the above result  an arbitrarily small perturbation of the lattice $\Z^p$.

Also Theorem 2, as noted in \cite{ou}, admits an extension to several dimensions: {\it  For every set $S\subset\R^p$ of finite measure there is  a   u.d. set $\La\subset\R^p, D(\La)= |S|,$ which is a uniqueness set for $PW_S$. }

\medskip

In order to get  multi-dimensional versions of Theorems 3 and 4,  one may introduce  multi-dimensional analogues of the spaces  $W^{(\alpha)}_S$ and $Y_S$ as follows:
The space $W^{(\alpha)}(\R^p), \alpha>p/2,$ consists of functions $f$ defined on $\R^p$ which are the Fourier transform of functions $F$ vanishing outside $S$ and satisfying
$$
\|F\|^2:=\int_{\R^p}(1+|t|^{2\alpha})|F(t)|^2\,dt<\infty,
$$where $|t|^2=t_1^2+\dots+t_p^2$ and $dt=dt_1\cdot\dots\cdot dt_p.$

The space $Y_S(\R^p)$ consists of continuous functions $f$ satisfying
$$\sup_{x\in\R^p}(1 + |x|^{2p})|f(x)| < \infty,   \quad |x|^2:=x_1^2+\dots+x_p^2,$$
and  such that the Fourier transform   $\hat f$ vanishes outside $S$.
 One may check that   both Lemmas 1 and 2 above admit  multi-dimensional extensions. This allows to get multi-dimensional analogues of Theorems 3 and 4: {\it Assume $S\subset\R^p$
 is such that $|S_a|<a^p$, for some $a>0$. Then the spaces $W^{(\alpha)}_S(\R^p), \alpha>p/2,$ and $Y_S(\R^p)$ admit a u.d. uniqueness set}.

\medskip
 One may also  check that the $p$-dimensional versions of Theorems 5--8 hold true.

\subsection{Questions}


We leave open several problems which might be  of certain interest.

\medskip\noindent 1. In connection with Theorems 1 and 2, one may ask:  {\it  Does there exist a u.d. set $\La , D(\La)=1,$
            which is a uniqueness set for $PW_S$, for every set $S\subset\R,|S| < 1$?}

\medskip\noindent 2. The following question arises in connection with Theorems 3 and 4:
                  {\it Let $S\subset\R$ be a set with periodic weak gaps.            Does the space $PW_S\cap  C(\R)$ admit a u.d.  uniqueness set? }

\medskip\noindent 3. It also seems an interesting question if Theorem 2 remains true for the Fourier transforms of integrable functions:
 {\it Let $S\subset\R$ be a set of finite measure. Is it true that the space
$$
              \widehat{L_S} := \{f=\hat F: F\in L^1(\R), F=0 \mbox{ a.e. outside } S\}
$$
           admits a u.d. uniqueness set?}

\medskip
  Theorem 5 implies that the answer is "yes" whenever   $S_a$  is not dense on $[0,a]$, for some $a.$

\medskip

The first author thanks the Israel Science Foundation for  partial support.

We thank also the CIRM center at Trento University for the kind hospitality during our two weeks RiP stay.

\bibliographystyle{amsalpha}

\end{large}

\end{document}